\documentclass[11pt]{article}
\usepackage[margin=1in]{geometry} 
\geometry{letterpaper}   

\usepackage{amssymb,amsfonts,amsmath,bbm,mathrsfs,stmaryrd}
\usepackage{mathtools}
\usepackage{xcolor}
\usepackage{url}

\usepackage{enumitem}

\setlist[enumerate,1]{label=\textup{(\alph*)}}
\setlist{nosep}

\usepackage[colorlinks,
             linkcolor=black!75!red,
             citecolor=blue,
             pdftitle={Top_gen},
             pdfauthor={Alexandru Chirvasitu, Frieder Ladisch, Pablo Soberon},
             pdfproducer={pdfLaTeX},
             pdfpagemode=None,
             bookmarksopen=true
             bookmarksnumbered=true]{hyperref}

\usepackage{tikz}
\usetikzlibrary{arrows,calc,decorations.pathreplacing,decorations.markings,intersections,shapes.geometric,through,fit,shapes.symbols,positioning,decorations.pathmorphing}

\usepackage{braket}

\usepackage[amsmath,thmmarks,hyperref]{ntheorem}
\usepackage{cleveref}

\creflabelformat{enumi}{#2#1#3}

\crefname{section}{Section}{Sections}
\crefformat{section}{#2Section~#1#3} 
\Crefformat{section}{#2Section~#1#3} 

\crefname{subsection}{\S}{\S\S}
\crefformat{subsection}{#2\S#1#3} 
\Crefformat{subsection}{#2\S#1#3} 

\theoremstyle{plain}

\newtheorem{lemma}{Lemma}[section]
\newtheorem{proposition}[lemma]{Proposition}
\newtheorem{corollary}[lemma]{Corollary}
\newtheorem{theorem}[lemma]{Theorem}

\newtheorem{question}[lemma]{Question}

\theoremstyle{nonumberplain}

\theoremstyle{plain}
\theorembodyfont{\upshape}
\theoremsymbol{\ensuremath{\blacklozenge}}

\newtheorem{definition}[lemma]{Definition}

\newtheorem{remark}[lemma]{Remark}

\crefname{definition}{definition}{definitions}
\crefformat{definition}{#2definition~#1#3} 
\Crefformat{definition}{#2Definition~#1#3} 

\crefname{ex}{example}{examples}
\crefformat{example}{#2example~#1#3} 
\Crefformat{example}{#2Example~#1#3} 

\crefname{remark}{remark}{remarks}
\crefformat{remark}{#2remark~#1#3} 
\Crefformat{remark}{#2Remark~#1#3} 

\crefname{convention}{convention}{conventions}
\crefformat{convention}{#2convention~#1#3} 
\Crefformat{convention}{#2Convention~#1#3}

\crefname{lemma}{lemma}{lemmas}
\crefformat{lemma}{#2lemma~#1#3} 
\Crefformat{lemma}{#2Lemma~#1#3} 

\crefname{proposition}{proposition}{propositions}
\crefformat{proposition}{#2proposition~#1#3} 
\Crefformat{proposition}{#2Proposition~#1#3} 

\crefname{corollary}{corollary}{corollaries}
\crefformat{corollary}{#2corollary~#1#3} 
\Crefformat{corollary}{#2Corollary~#1#3} 

\crefname{theorem}{theorem}{theorems}
\crefformat{theorem}{#2theorem~#1#3} 
\Crefformat{theorem}{#2Theorem~#1#3} 

\crefname{assumption}{assumption}{Assumptions}
\crefformat{assumption}{#2assumption~#1#3} 
\Crefformat{assumption}{#2Assumption~#1#3} 

\crefname{equation}{}{}
\crefformat{equation}{(#2#1#3)} 
\Crefformat{equation}{(#2#1#3)}

\theoremstyle{nonumberplain}
\theoremsymbol{\ensuremath{\blacksquare}}

\newtheorem{proof}{Proof}


\numberwithin{equation}{section}

\newcommand\bC{{\mathbb C}}

\newcommand\bF{{\mathbb F}}

\newcommand\bQ{{\mathbb Q}}
\newcommand\bR{{\mathbb R}}
\newcommand\bS{{\mathbb S}}

\newcommand\bZ{{\mathbb Z}}

\newcommand\cF{{\mathcal F}}

\newcommand\cP{{\mathcal P}}
\newcommand\cQ{{\mathcal Q}}



\DeclareMathOperator{\rank}{rank}
\DeclareMathOperator{\Hom}{Hom}
\DeclareMathOperator{\cpd}{cpd}
\DeclareMathOperator{\gcpd}{gcpd}
\DeclareMathOperator{\orth}{\mathbf{O}}  
\DeclareMathOperator{\GL}{GL}
\DeclareMathOperator{\Aff}{Aff}   
\DeclareMathOperator{\conv}{conv}   
\DeclareMathOperator{\skel}{skel}   


\DeclarePairedDelimiter{\abs}{\lvert}{\rvert}



\newcommand{\qedhere}{\mbox{}\hfill\ensuremath{\blacksquare}}


\title{Non-commutative groups as prescribed polytopal symmetries}
\author{Alexandru Chirvasitu, Frieder Ladisch, and Pablo Sober\'on}


\begin{document}

\date{}

\newcommand{\Addresses}{{
  \bigskip
  \footnotesize

  \textsc{Department of Mathematics, University at Buffalo, Buffalo,
    NY 14260-2900, USA}\par\nopagebreak \textit{E-mail address}:
  \texttt{achirvas@buffalo.edu}

  \medskip
  
  \textsc{Institut für Mathematik, Universität Rostock,
          18051 Rostock, 
          Germany}\par\nopagebreak \textit{E-mail address}:
     \texttt{frieder.ladisch@uni-rostock.de}
  
  \medskip
  
  \textsc{Department of Mathematics, Baruch College, City University of New York,
    New York, NY 10010, USA}\par\nopagebreak \textit{E-mail address}:
  \texttt{pablo.soberon-bravo@baruch.cuny.edu}


}}

\maketitle

\begin{abstract}

We study properties of the realizations of groups as the combinatorial automorphism group of a convex polytope.  We show that for any non-abelian group $G$ with a central involution there is a centrally symmetric polytope with  $G$ as its combinatorial automorphisms. We show that for each integer $n$, there are groups that cannot be realized as the combinatorial automorphisms of convex polytopes of dimension at most $n$.  We also give an optimal lower bound for the dimension of the realization of a group as the group of isometries that preserves a convex polytope.

\end{abstract}

\noindent {\em Key words: polytope, symmetric polytope, combinatorial automorphism, geometric automorphism}

\vspace{.5cm}

\noindent{MSC 2010: 52B15; 52B11}

\tableofcontents



\subsection*{Acknowledgements}

A.C. is grateful for partial support through NSF grant DMS-1801011.
P.S. is grateful for partial support through NSF grant DMS-1851420.
F.L. is grateful for support by DFG, Project SCHU1503/7-1.

\section*{Introduction}\label{se.intro}

Polyhedra and their symmetries have been a rich subject of study.  In this paper we are interested in the symmetries of convex polytopes, the convex hulls of finite sets of points in $\mathbb{R}^d$.  For a given polytope~$\cP$, we consider the following two symmetry groups: The \emph{geometric symmetry group} $G(\cP)$ consists of all (Euclidean) isometries of the ambient space that map $\cP$ onto itself.  The \emph{combinatorial symmetry group} $\Gamma(\cP)$ of $\cP$ consists of all the automorphisms of the face lattice of $\cP$. The group $\Gamma(\cP)$ can be identified with the group of all permutations of the vertices of $\cP$ which map faces of $\cP$ to faces of $\cP$.  It has recently been established that every finite group is isomorphic to the combinatorial automorphism group of a convex polytope $\cP$.  This result was first proved by Schulte and Williams \cite{sw} and later by Doignon \cite{Doignon:2018ch} with a simplified proof.  In Doignon's paper, it was shown that every finite group is the combinatorial automorphism group of a $0/1$-polytope.

Given a group $G$, there are many different convex polytopes whose symmetry group is exactly $G$.  This leads to a rich family of extremal problems: \textit{given a group $G$ and a parameter $\lambda( \cdot )$, determine the maximum/minimum value that $\lambda(P)$ can take, where $\cP$ ranges over the polytopes whose (geometric or combinatorial) symmetry group is exactly $G$.} In this note we are interested in finding polytopes $\cP$ of minimal dimension with prescribed symmetry group.  We give a lower bound for the dimension of a polytope having an elementary abelian group of given order as combinatorial automorphism group (\Cref{pr.simpl-sph}).  A consequence is that for every $d$, there are finite groups which can not be realized as combinatorial automorphism groups of convex polytopes of dimension less than or equal to $ d$.
It turns out that for elementary abelian groups, our lower bound is sharp (\Cref{th:cpd-sharp}). While our results also yield lower bounds for the dimension of a polytope with some other prescribed finite group as combinatorial automorphism group, these lower bounds are probably not optimal when the group is not elementary abelian.

We also show that finding a polytope of smallest dimension with a prescribed group as \emph{geometric} symmetry group reduces to a problem in the representation theory of finite groups.  Using this, we can compute \emph{exactly} the smallest dimension of a polytope having a given abelian group as geometric symmetry group (\Cref{th.bd}), and similarly for other classes of groups. For example, the symmetric group $S_n$ on $n$ letters is the geometric (and combinatorial) symmetry group of a regular simplex of dimension $n-1$, and this is in fact the smallest dimension of a polytope with geometric symmetry group $S_n$ (see \Cref{th.bd_sn_etal}).


The results described so far affirmatively answer two questions of Schulte, Sober\'on and Williams \cite[Open Questions~1 and 2]{ssw}.
We also consider the following problem suggested by Schulte, Sober\'on, and Williams: Instead of solving instances of the general extremal problems described above, one can look for polytopes with fixed symmetry groups that satisfy additional geometric properties.  This was explored in the cited paper~\cite{ssw}, where it was established that every finite abelian group of even order is the automorphism group of a \textit{centrally symmetric} polytope.  Moreover, the involution that corresponds to the central symmetry can be prescribed in advance.  We generalize this result to non-abelian groups with a central involution, thereby answering another question from \cite{ssw} (see \Cref{th.sym}).  The methods we use come from representation theory and geometric group theory.  Representation theory has been used previously to study convex polytopes, as can be seen in \cite{Guralnick:2006gk, Friese:2016ka, Baumeister:2009dfa} and the references therein.

Our main results answer all the open questions \cite[Open Questions 1, 2 and 3]{ssw} affirmatively.  In \Cref{sec.openqu}, we propose some new questions, which are suggested by our results.

\section{Preliminaries}

Given a polytope $\cP$, we denote by $\Gamma(\cP)$ its group of combinatorial automorphisms and by $G(\cP)$ its group of geometric symmetries.  One of our main tools we use is the following theorem.

\begin{theorem}[\cite{ssw}]\label{theorem-symmetrybreaks}
  Let $d\geq 3$, let $Q$ be a convex $d$-polytope with (combinatorial) automorphism group $\Gamma(Q)$, and let $\Gamma$ be a subgroup of $\Gamma(Q)$. Then there exists a convex $d$-polytope~$\mathcal{P}$ with the following properties:
\begin{enumerate} 
\item  $\Gamma(\mathcal{P})=\Gamma$.
\item  \label{it:tess} $\mathcal{P}$ is isomorphic (as an abstract polytope) to a face-to-face tessellation $\mathcal{T}$ of the $(d-1)$-sphere $\mathbb{S}^{d-1}$ by spherical convex $(d-1)$-polytopes.
\item $\skel_{d-2}(\mathcal{C}(Q))$ is a subcomplex of $\skel_{d-2}(\mathcal{P})$.
\item  \label{it:geom} If $\Gamma$ is a subgroup of the (geometric) symmetry group $G(Q)$ of $Q$, then the tessellation $\mathcal{T}$ on $\mathbb{S}^{d-1}$ in \labelcref{it:tess}  can be chosen in such a way that $G(\mathcal{T})=\Gamma=\Gamma(\mathcal{T})$.
\end{enumerate}
\end{theorem}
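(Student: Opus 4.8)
The plan is to build $\cP$ by starting from the boundary complex of $Q$ and \emph{decorating} the relative interiors of its facets in a controlled, asymmetric fashion, so that exactly the symmetries in $\Gamma$ survive. First I would radially project $\partial Q$ from an interior point to obtain a face-to-face tessellation $\cT_0$ of $\mathbb{S}^{d-1}$ by spherical convex $(d-1)$-polytopes; under this projection $\Gamma(Q)$ acts by combinatorial automorphisms of $\cT_0$ and, in the geometric case, $G(Q)$ acts by isometries of the sphere. All subsequent modifications will take place inside the $(d-1)$-cells (the projected facets), leaving the $(d-2)$-skeleton of $\cC(Q)$ entirely untouched; this is precisely what will yield property~(c).

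The key combinatorial input is that the combinatorial automorphism group of a polytope acts \emph{freely} on its flags, since an automorphism fixing a flag fixes the whole face lattice. In particular $\Gamma$ acts freely on the flags of $Q$. I would fix a transversal for the $\Gamma$-action on flags and attach to each transversal flag an intrinsically rigid combinatorial ``gadget'' --- for concreteness, a prescribed pattern of repeated \emph{stackings} (insert a new vertex just beyond a facet and pass to the convex hull), chosen distinct for distinct $\Gamma$-orbits and encoding the position of the flag inside its facet. Stacking is convexity-preserving and replaces a facet by pyramids over its ridges, so it subdivides the facet interior while keeping every ridge, hence the entire $(d-2)$-skeleton, intact. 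Propagating each gadget across its $\Gamma$-orbit by the free $\Gamma$-action produces a $\Gamma$-equivariant decorated tessellation $\cT$, which by the stacking construction is the boundary complex of an honest convex $d$-polytope $\cP$, giving~(b).

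To verify~(a) I would argue both inclusions. The inclusion $\Gamma\subseteq\Gamma(\cP)$ is immediate from equivariance of the decoration. For $\Gamma(\cP)\subseteq\Gamma$, any automorphism $\psi$ of $\cP$ must carry gadgets to gadgets; since gadgets from different $\Gamma$-orbits are pairwise non-isomorphic and each gadget records its flag data, $\psi$ matches each decorated flag to a flag in the same $\Gamma$-orbit, and freeness of the $\Gamma$-action singles out a unique $\gamma\in\Gamma$ agreeing with $\psi$ on that flag. Connectivity of the flag graph then forces $\psi=\gamma$ globally, so $\psi\in\Gamma$. For the geometric refinement~(d), when $\Gamma\subseteq G(Q)$ I would perform the same decoration \emph{isometrically}, placing each gadget via the isometric $\Gamma$-action; this makes $\Gamma\subseteq G(\cT)$, and since always $G(\cT)\subseteq\Gamma(\cT)=\Gamma$, we obtain $G(\cT)=\Gamma=\Gamma(\cT)$.

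The main obstacle is the simultaneous control of two competing demands: the gadgets must be combinatorially \emph{rigid} enough to kill every symmetry outside $\Gamma$ --- including the facet stabilizers lying in $\Gamma(Q)\setminus\Gamma$ --- yet the whole decorated complex must remain \emph{convexly realizable} as the boundary of a genuine polytope while leaving the $(d-2)$-skeleton fixed. Reconciling rigidity with convex realizability, in particular checking that repeated stackings can be made pairwise distinguishable and orbit-consistent while all inserted vertices stay in convex position, is where the real work lies; the freeness of the action on flags is the bookkeeping device that makes the whole assignment consistent.
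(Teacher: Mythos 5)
There is a genuine gap, in fact two. First, your construction cannot deliver property~(c) as stated. You decorate the interiors of the facets of $Q$ by stackings and claim this leaves $\skel_{d-2}(\cC(Q))$ untouched, but $\cC(Q)$ is the \emph{barycentric subdivision} of the boundary complex: its vertices include the barycenters of the facets, and its $(d-2)$-skeleton contains simplices lying in the relative interiors of the facets of $Q$ (for $d=3$, for instance, the edges joining a facet barycenter to the barycenters of its edges and vertices). What your stackings preserve is the $(d-2)$-skeleton of $Q$ itself (the ridges), which is a different and much smaller complex. The construction this theorem is quoted from (the paper only cites \cite{ssw} and sketches the method) begins precisely by passing to the barycentric subdivision and then adding faces; that first step is not optional, both for~(c) and for your own purposes: a ``gadget attached to a flag'' has no well-defined location inside a facet shared by many flags until the facet has been subdivided in a flag-indexed way, which is exactly what $\cC(Q)$ provides. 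Stacking operates on whole facets, so without the subdivision your gadgets cannot be localized at individual flags in the first place.

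Second, the inclusion $\Gamma(\cP)\subseteq\Gamma$ --- the heart of the theorem --- is asserted rather than proved. You say an automorphism $\psi$ of $\cP$ ``must carry gadgets to gadgets,'' but $\psi$ is an automorphism of the face lattice of $\cP$ and has no a priori knowledge of which faces are ``original'' and which are ``decoration''; one must exhibit a combinatorial invariant that forces $\psi$ to respect this partition. In \cite{ssw} this is done by arranging the added structure so that vertex degrees in $\skel_1(\cP)$ distinguish the new vertices from the old ones and distinguish the gadget types from one another (the paper explicitly remarks that degree analysis in the $1$-skeleton is the only tool used to certify that the extra symmetries are broken). Your closing paragraph candidly identifies this as ``where the real work lies,'' which is accurate: without a concrete mechanism of this kind the argument that $\psi$ preserves orbits of decorated flags, and hence lies in $\Gamma$, does not go through. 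The overall strategy --- exploit the free action of $\Gamma(Q)$ on flags, decorate a transversal, propagate equivariantly, and in the geometric case do so isometrically --- is the right one and matches the cited construction in spirit, but as written the proposal omits the barycentric subdivision that makes the decoration well-defined and property~(c) true, and leaves the symmetry-breaking verification unproved.
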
 

In the statement above, $\skel_{d-2}(\cdot )$ stands for the $(d-2)$-dimensional skeleton of a polytope, and $\mathcal{C}(Q)$ for the barycentric subdivision of the boundary complex of $Q$. The proof of \Cref{theorem-symmetrybreaks} is done by taking a barycentric subdivision of $Q$ and then adding faces in order to break the undesired symmetries in $\cP$.  The method preserves any geometric symmetries of $Q$ that were present in $\Gamma$.  This method actually gives a stronger result.  Not only is the group of combinatorial automorphisms of $\cP$ equal to $\Gamma$, but the group of automorphisms of $\skel_{1}(\cP)$ (as a graph) is equal to $\Gamma$.  The reason for this is that the only tool that $\cite{ssw}$ uses to determine that extra symmetries have been broken is an analysis of the degrees of the vertices in $\skel_{1}(\cP)$.  It follows that in \Cref{th.sym} below, the combinatorial automorphism group~$\Gamma(\cP)$ can be replaced by the automorphism group of the $1$-skeleton (as a graph) of our polytope.

The following corollary will be useful a few times:
\begin{corollary}\label{cor.geosym-givengroup}
  Let $d\geq 3$ and let $\Gamma$ be a finite subgroup of the orthogonal group $\orth_d(\bR)$.  Then there exists a convex $d$-polytope~$\cP$ such that $G(\cP) = \Gamma(\cP) = \Gamma$.
\end{corollary}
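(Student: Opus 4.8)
The plan is to feed an invariant polytope into \Cref{theorem-symmetrybreaks} and then convexify the resulting spherical tessellation. First I would produce a convex $d$-polytope $Q$ carrying $\Gamma$ as a group of isometries. Choosing a finite set $S\subseteq\bR^d$ whose $\Gamma$-orbit affinely spans $\bR^d$ (for instance $S=\{0,e_1,\dots,e_d\}$) and setting $Q=\conv(\Gamma\cdot S)$, orthogonality of $\Gamma$ makes $Q$ a full-dimensional $\Gamma$-invariant polytope, so $\Gamma\le G(Q)\le\Gamma(Q)$. The hypothesis of part~\labelcref{it:geom} of \Cref{theorem-symmetrybreaks} is therefore met, and applying that theorem to the pair $(Q,\Gamma)$ yields a face-to-face tessellation $\cT$ of $\bS^{d-1}$ by spherical convex $(d-1)$-polytopes with $G(\cT)=\Gamma=\Gamma(\cT)$; here $G(\cT)$ is read inside $\orth_d(\bR)$, as the isometries of $\bS^{d-1}$ are exactly the restrictions of orthogonal maps.

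Next I would convexify, letting $V\subseteq\bS^{d-1}$ be the vertex set of $\cT$ and $\cP=\conv(V)$. The decisive geometric observation is that the points of $V$ lie on $\bS^{d-1}$ while affinely spanning $\bR^d$; since two distinct spheres meet in a set contained in a hyperplane, $\bS^{d-1}$ is the \emph{unique} sphere through $V$. Hence every isometry of $\cP$ must carry this circumsphere to itself, so it fixes the origin, is orthogonal, and permutes $V$; conversely any orthogonal map permuting $V$ preserves $\cP$. In particular $\Gamma\le G(\cP)\le\Gamma(\cP)$ (the second inclusion because a geometric symmetry of a full-dimensional polytope induces a combinatorial one). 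Thus the corollary will follow from the single combinatorial equality $\Gamma(\cP)=\Gamma$, which closes the chain $\Gamma\le G(\cP)\le\Gamma(\cP)=\Gamma$.

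The step I expect to be the crux is this combinatorial identification, namely that radial projection from the origin carries $\partial\cP$ onto $\cT$, so that $\cP\cong\cT$ as abstract polytopes and $\Gamma(\cP)=\Gamma(\cT)=\Gamma$. This is transparent when $\cT$ is simplicial with sufficiently small cells: the $d$ vertices of each spherical simplex then span a genuine supporting hyperplane of $\conv(V)$, so each cell reappears as a facet of $\cP$; a non-simplicial cell with non-coplanar vertices would instead be cut up by the convex hull, and could change the combinatorial type or introduce spurious symmetries. I would therefore arrange $\cT$ to be simplicial with small cells, which is consistent with the construction behind \Cref{theorem-symmetrybreaks}: it starts from the simplicial barycentric subdivision $\cC(Q)$, iterated barycentric subdivision shrinks the cells, and the symmetry-breaking faces it adds can be taken to respect simpliciality. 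Granting this, both $\Gamma(\cP)=\Gamma$ and $G(\cP)=\Gamma$ follow as above.
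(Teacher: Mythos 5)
Your first half coincides with the paper's proof: produce a full-dimensional $\Gamma$-invariant convex $d$-polytope $Q$ (the paper takes $Q=\conv(F)$ for a sufficiently large union $F$ of $\Gamma$-orbits on the unit sphere, you take the orbit of an affinely spanning set; either way $\Gamma\le G(Q)$) and then invoke \Cref{theorem-symmetrybreaks}~\labelcref{it:geom}. The paper stops there: it reads that theorem as already delivering the convex $d$-polytope $\cP$ with $G(\cP)=\Gamma(\cP)=\Gamma$, so no convexification of the spherical tessellation is left to the user.

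The additional step you undertake --- recovering $\cP$ as $\conv(V)$ for $V$ the vertex set of $\cT$ and arguing that radial projection identifies $\partial\cP$ with $\cT$ --- has a genuine gap as written. Smallness of simplicial cells is not enough for $\conv(V)$ to reproduce $\cT$: the geodesic simplex on vertices $v_1,\dots,v_d$ becomes a facet of $\conv(V)$ precisely when the affine hyperplane through these points supports $\conv(V)$, i.e.\ when the spherical cap it cuts off contains no \emph{other} point of $V$. That cap is the circumscribed cap of the cell, which can be far larger than the cell itself when the cell is thin or obtuse (three nearly-cogeodesic points of mutual distance $\varepsilon$ have a circumcap close to a hemisphere), and the failure also occurs at every scale for fat cells --- it is exactly the failure of the Delaunay empty-circumsphere condition, as in an edge flip of four nearly-cocircular points. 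So ``simplicial with sufficiently small cells'' does not close the argument; you would need to check that the tessellation built in the proof of \Cref{theorem-symmetrybreaks} can be arranged to satisfy an empty-circumcap condition, a verification you explicitly defer (``Granting this\dots''). Since the cited theorem already hands you the convex polytope together with its geometric symmetry group, the cleanest repair is simply to delete this convexification step.
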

\begin{proof}
  The group~$\Gamma$ acts on the unit sphere $\bS^{d-1}\subset \bR^d$.  If we take a sufficiently large union $F$ of finitely many $\Gamma$-orbits, then the convex hull $\cQ = \conv(F)$ will be a convex $d$-polytope.  By construction, $\Gamma \leq G(\cQ)$, so the result follows from~\Cref{theorem-symmetrybreaks}, \labelcref{it:geom}.
\end{proof}

A few times, we will use the following standard result from representation theory~\cite[Theorem~2.13]{huppCT}.
\begin{proposition}
\label{pr.inv_innprod}
Let $\rho\colon \Gamma\to \GL(d,\bR)$ be a representation of the finite group $\Gamma$.  Then there is a $\Gamma$-invariant inner product on $\bR^d$.  Equivalently, the representation $\rho$ is similar to a representation $\Gamma\to \orth_d(\bR)$.
\end{proposition}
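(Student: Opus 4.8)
The plan is to use the classical averaging argument (Weyl's ``unitarian trick''). Starting from any inner product on $\bR^d$---for concreteness the standard one $\langle\cdot,\cdot\rangle$---I would produce a $\Gamma$-invariant one by averaging over the group:
\[
\langle u,v\rangle_\Gamma \;=\; \frac{1}{\abs{\Gamma}}\sum_{g\in\Gamma}\bigl\langle \rho(g)u,\,\rho(g)v\bigr\rangle,\qquad u,v\in\bR^d.
\]
This is manifestly bilinear and symmetric, and it is positive definite because it is a positive average of positive-definite forms: for $u\neq 0$ every summand $\langle\rho(g)u,\rho(g)u\rangle$ is strictly positive (as $\rho(g)$ is invertible), so in particular $\langle u,u\rangle_\Gamma\ge \abs{\Gamma}^{-1}\langle u,u\rangle>0$. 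Hence $\langle\cdot,\cdot\rangle_\Gamma$ is an inner product.

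Next I would verify $\Gamma$-invariance. For any $h\in\Gamma$,
\[
\langle \rho(h)u,\rho(h)v\rangle_\Gamma
= \frac{1}{\abs{\Gamma}}\sum_{g\in\Gamma}\bigl\langle \rho(gh)u,\,\rho(gh)v\bigr\rangle
= \langle u,v\rangle_\Gamma,
\]
where the last equality uses the substitution $g\mapsto gh$, which merely permutes the elements of $\Gamma$. This establishes the first assertion.

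For the equivalence with conjugacy into $\orth_d(\bR)$, I would pass to an orthonormal basis for the new form. By Gram--Schmidt applied to $\langle\cdot,\cdot\rangle_\Gamma$ there is a basis of $\bR^d$ orthonormal with respect to $\langle\cdot,\cdot\rangle_\Gamma$; let $P\in\GL(d,\bR)$ be the change-of-coordinates matrix. Then $\langle Px,Py\rangle_\Gamma=\langle x,y\rangle$ for all $x,y$, and a short computation shows the conjugated representation $g\mapsto P^{-1}\rho(g)P$ preserves the standard inner product, i.e.\ takes values in $\orth_d(\bR)$. Thus $\rho$ is similar to an orthogonal representation.

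There is no genuine obstacle here: the entire content is the finiteness of $\Gamma$, which makes the averaging sum a finite, well-defined expression. The only points needing minor care are the direction of the substitution in the invariance step (right multiplication by $h$, matching the insertion of $\rho(h)$ on the left) and the remark that positive-definiteness of the average is automatic. Since the result is standard, one could alternatively simply invoke \cite[Theorem~2.13]{huppCT}, as the statement indicates.
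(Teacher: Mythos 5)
Your proposal is correct and follows essentially the same argument as the paper: average an arbitrary inner product over $\Gamma$ to obtain an invariant one, then pass to an orthonormal basis for the averaged form to conjugate $\rho$ into $\orth_d(\bR)$. The normalization by $\abs{\Gamma}^{-1}$ is the only (immaterial) difference.
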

\begin{proof}
  Let $(\cdot,\cdot)$ be an arbitrary inner product on $\bR^d$.  Then $[x,y]:= \sum_{g\in \Gamma} (\rho(g)x,\rho(g)y)$ yields a $\Gamma$-invariant inner product on $\bR^d$.  When $S=(s_1,\dotsc,s_d)$ is an orthonormal basis of $\bR^d$ with respect to $[\cdot,\cdot]$, then the corresponding matrices $S^{-1}\rho(g)S$ are orthogonal.
\end{proof}
A general consequence of this result is that it does not make much of a difference for the questions considered in this paper, whether we deal with the geometric symmetry group of polytopes, or the group of affine symmetries of polytopes (the affine maps sending a polytope to itself).


\section{Symmetric polytopes}\label{subse.sym}

Our first main result answers \cite[Open Question 3]{ssw} affirmatively.

\begin{theorem}\label{th.sym}
  Let $\Gamma$ be a finite group and $\sigma\in \Gamma$ a central order-two element. Then, there is a centrally symmetric polytope $\cP$ with
  \begin{equation*}
    G(\cP)=\Gamma(\cP)\cong \Gamma
  \end{equation*}
  and $\sigma$ acting as the central symmetry. 
\end{theorem}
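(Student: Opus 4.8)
The plan is to reduce the statement to \Cref{cor.geosym-givengroup} by producing a faithful orthogonal representation of $\Gamma$ in which $\sigma$ acts as the antipodal map. Concretely, I would look for a faithful real representation $\rho\colon \Gamma \to \orth_d(\bR)$ with $d \geq 3$ and $\rho(\sigma) = -\id$. Given such a $\rho$, \Cref{cor.geosym-givengroup} supplies a convex $d$-polytope $\cP$ with $G(\cP) = \Gamma(\cP) = \rho(\Gamma) \cong \Gamma$; since $\rho(\sigma) = -\id$ lies in $G(\cP)$, the polytope is invariant under the antipodal map and hence centrally symmetric, with $\sigma$ realized by the central symmetry. (The construction in \Cref{cor.geosym-givengroup} takes the convex hull of $\Gamma$-orbits on a sphere about the origin, so $\cP$ is automatically centered at $0$.) Thus everything comes down to building $\rho$.

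To build $\rho$, I would start from the real regular representation $\bR[\Gamma]$, which is faithful. Because $\sigma$ is central and $\sigma^2 = 1$, the operator by which $\sigma$ acts splits $\bR[\Gamma] = V_+ \oplus V_-$ into its $(\pm 1)$-eigenspaces, and each $V_\pm$ is a $\Gamma$-subrepresentation; on $V_-$ the element $\sigma$ acts as $-\id$ by construction. The key step is to check that $V_-$ is already faithful. For this I would use the basis of $V_-$ indexed by the cosets of $\langle\sigma\rangle$, given by the vectors $g_i - \sigma g_i$ for coset representatives $g_i$; left multiplication by any $h \in \Gamma$ permutes these vectors up to sign, and a short bookkeeping argument shows $h$ fixes all of them only when $h = 1$. (Alternatively, one can verify faithfulness by a character computation: using column orthogonality and the fact that $\chi_W(\sigma) = \pm \chi_W(1)$ for each irreducible $W$, the sum $\sum_{W}\chi_W(1)\chi_W(g)$ taken over the irreducibles on which $\sigma$ acts by $-1$ equals $\tfrac{|\Gamma|}{2}\bigl(\delta_{g,1} - \delta_{g,\sigma}\bigr)$, which forces the kernel of $V_-$ to be trivial.)

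Finally I would tidy up the dimension and the metric. Replacing $V_-$ by a direct sum of a few copies of itself keeps $\sigma$ acting as $-\id$ and keeps the representation faithful while ensuring the dimension is at least $3$; and by \Cref{pr.inv_innprod} the resulting representation is similar to an orthogonal one, with the identity $\rho(\sigma) = -\id$ unaffected by the conjugation since $-\id$ is central in $\GL(d,\bR)$. This yields the desired $\rho$ and completes the reduction.

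I expect the only genuine obstacle to be the faithfulness of $V_-$: a priori, discarding the $(+1)$-eigenspace could destroy the ability to separate some nontrivial element from the identity, and one must use that $\sigma$ is \emph{central} — this is exactly what makes the eigenspaces $\Gamma$-subrepresentations and makes the coset bookkeeping (equivalently, the character identity) go through. Everything else is routine once $\rho$ is in hand.
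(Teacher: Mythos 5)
Your proposal is correct and is essentially the paper's own argument: your $V_-$, the $(-1)$-eigenspace of $\sigma$ on $\bR[\Gamma]$, is canonically the induced representation $\mathrm{Ind}_{\langle\sigma\rangle}^{\Gamma}(\mathrm{sign})$ that the paper uses, and both routes then invoke \Cref{pr.inv_innprod} and \Cref{cor.geosym-givengroup}. If anything, you supply details the paper leaves implicit (faithfulness of $V_-$ and padding the dimension up to $3$), so no changes are needed.
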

\begin{proof}
  Let $G=\langle\sigma\rangle$ and $\rho:G\to \{\pm 1\}\subset \bR^{\times}$ the non-trivial representation, whose one-dimensional real carrier space we denote by $V$. Upon equipping the induced representation
  \begin{equation*}
    W:=\mathrm{Ind}_G^{\Gamma}V
  \end{equation*}
  with a $\Gamma$-invariant inner product
  (\Cref{pr.inv_innprod}), 
  we get an embedding of $\Gamma$ into the orthogonal group of $W$.
  Moreover, $\sigma$ acts as $-1$ on $W$ by construction.
  By \Cref{cor.geosym-givengroup}, 
  it follows that there is a polytope~$\cP$ such that 
  $G(\cP)=\Gamma(\cP)\cong \Gamma$. 
\end{proof}


\section{The convex polytope dimension}\label{subse.1-2}

Our next result shows that the ``convex polytope dimension of a group'' is a meaningful parameter.

\begin{definition}
  Let $G$ be a group.  We define $\cpd(G)$ (the \textit{convex polytope dimension of $G$}) to be the minimum integer $d$ such that there is a convex polytope $\mathcal{P}$ in $\mathbb{R}^d$ such that $\Gamma (\mathcal{P}) = G$.
\end{definition}

The results in \cite{sw} show that this parameter is well defined.  In this section we will prove that it can be arbitrarily large, and is therefore interesting to compute.  Note that when $A$ is a subgroup of $B$ and $\cpd(B)\geq 3$, then $\cpd(A) \le \cpd(B)$.  This is a direct consequence of \Cref{theorem-symmetrybreaks}.  On the other hand, 
the only groups with $\cpd(B)=2$ are the dihedral groups of order $2n\geq 6$, and the only groups with $\cpd(B)\leq 1$ are the groups of order $1$ and $2$. So when $B$ is a dihedral group and $A\leq B$ is cyclic of order $n\geq 3$, or the Klein four-group, then $\cpd(B)=2$, but $\cpd(A)=3$.  Aside from these exceptions, $\cpd(\cdot)$ is monotone.

\begin{remark}\label{re:act-on-sphr}
  As recalled in \cite[Section 2]{ssw}, to every convex polytope $P$ one can associate its {\it boundary complex}, consisting of its proper faces. This complex
  \begin{itemize}
  \item gives a tesselation of the boundary $\partial P$, which, topologically, is a sphere;
  \item has a barycentric subdivision giving a triangulation of said sphere, i.e., a {\it simplicial sphere};
  \item depends only on the combinatorial data of how the proper faces of $P$ are glued along facets.
  \end{itemize}
  It follows that the combinatorial symmetry group $\Gamma(\cP)$ of a polytope $\cP$ can be regarded as a group of {\it simplicial symmetries} of $\partial P$, i.e., automorphisms of $\partial P$ equipped with its simplicial complex structure. This renders plausible the relevance of the  result below.
\end{remark}

The proof of the following result requires, for a prime $p$ and an integer $n\ge -1$, the notion of a {\it cohomology $n$-sphere over $\bZ/p$}. Cohomology $n$-manifolds are introduced in \cite[Definition I.3.3]{borel-seminar} and are the central subject of study in said reference. They coincide with the objects introduced by Smith in \cite{smth-per2} to provide the appropriate context for his study of finite transformation groups.

For brevity, we refer to a cohomology $n$-manifold with the same homology as the sphere in $\bZ/p$ as a {\it cohomology $n$-sphere over $\bZ/p$}.

We do not recall the full definition of cohomology manifolds here, pausing only to note that
\begin{itemize}
\item simplicial $n$-spheres are cohomology $n$-spheres (over every $\bZ/p$);
\item cohomology $0$-spheres are just plain $0$-spheres, i.e. disjoint unions of two points (this follows for instance from \cite[5.3]{smth-per2}).
\end{itemize}

We will also be referring to the {\it link} $\mathrm{lk}_K(\Delta)$ of a simplex $\Delta$ in a simplicial complex $K$. This is usually defined via the {\it star} $\mathrm{st}_K(\Delta)$:
\begin{equation*}
  \mathrm{st}_K(\Delta) = \text{ subcomplex of }K\text{ generated by the simplices }\Delta'\text{ such that }\Delta\subseteq \Delta'\subseteq K. 
\end{equation*}
Then, the subcomplex of $\mathrm{st}_K(\Delta)$ consisting only of simplices disjoint from $\Delta$ is the link $\mathrm{lk}_K(\Delta)$. See e.g. \cite[discussion preceding Proposition 3.2.12]{fp-cell} or \cite[Definition 2.4.2]{maun-at}, though in the latter reference our `stars' are `nerves'. The introductory material in \cite[\S 2]{bry-pl} is another reference for these notions.

It follows for instance from \cite[Theorem X.6.9]{wild-man} that in a simplicial complex that is a cohomology $n$-manifold, the link on a $d$-simplex is a cohomology $(n-d-1)$-sphere. 

The following result is essentially \cite[second corollary, p.107]{smth}, but it was not obvious to us how it follows from the discussion preceding it, so we include a proof for completeness. See also \Cref{re:p2} for further discussion.

\begin{theorem}\label{pr.simpl-sph}
  Let $p$ be a prime, and let $\Gamma=(\bZ/p)^r$ be an elementary abelian $p$-group acting simplicially and effectively on a simplicial $n$-sphere. Then, we have
  \begin{itemize}
  \item $r\le n+1$ if $p=2$; 
  \item $2r\le n+1$ otherwise.
  \end{itemize}
\end{theorem}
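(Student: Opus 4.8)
The plan is to run a Smith-theoretic induction on the rank $r$, working throughout with cohomology $n$-spheres over $\bZ/p$ rather than simplicial spheres; this is harmless, since a simplicial $n$-sphere is such a space and the relevant fixed-point sets stay within this class. Write $Y$ for the given sphere and, for a subgroup $H\le\Gamma$, let $Y^H$ be its fixed-point set. The basic input is Smith's theorem \cite{smth}: if $C\cong\bZ/p$ acts on a cohomology $N$-sphere over $\bZ/p$, then $Y^C$ is again a cohomology sphere over $\bZ/p$, of dimension $\le N$; if the action is nontrivial the dimension drops strictly, and \emph{when $p$ is odd the codimension $N-\dim Y^C$ is even}. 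This parity is the ultimate source of the two-case conclusion: it reflects the fact, recorded above, that a cohomology $0$-sphere is a two-point set, on which an odd-order group cannot act effectively. Iterating along a composition series shows that $Y^H$ is a cohomology sphere over $\bZ/p$ for every $H$. Setting $c(H):=\dim Y-\dim Y^H\ge 0$, effectiveness gives $c(H)\ge 1$ for $H\ne 1$, and since $\dim Y^\Gamma\ge -1$ we have $c(\Gamma)\le n+1$. Thus it suffices to prove $c(\Gamma)\ge r$ when $p=2$, and $c(\Gamma)\ge 2r$ when $p$ is odd.

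I would prove this by induction on $r$, the case $r=0$ being trivial. For $r\ge 1$ the key is to locate a subgroup $H\le\Gamma$ of index $p$ (hence of rank $r-1$) whose fixed sphere is strictly larger than that of $\Gamma$, i.e. $\dim Y^H>\dim Y^\Gamma$. Granting such an $H$: it is effective, being a subgroup of an effective action, so the inductive hypothesis yields $c(H)\ge r-1$ (resp. $2(r-1)$). The quotient $\Gamma/H\cong\bZ/p$ acts on the cohomology sphere $Y^H$ with fixed set $(Y^H)^{\Gamma/H}=Y^\Gamma$; since $\dim Y^\Gamma<\dim Y^H$ this action is nontrivial, so by the Smith dichotomy above $\dim Y^\Gamma\le\dim Y^H-1$ in general and $\le\dim Y^H-2$ for odd $p$. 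Combining, $c(\Gamma)=c(H)+(\dim Y^H-\dim Y^\Gamma)\ge (r-1)+1=r$ when $p=2$, and $\ge 2(r-1)+2=2r$ when $p$ is odd, completing the induction.

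It remains to produce the index-$p$ subgroup with a strict increase of the fixed sphere, and this is the main obstacle: a priori every such $H$ might satisfy $Y^H=Y^\Gamma$ (the degenerate configuration in which no single corank-one subgroup detects more of the sphere than the whole group does). I would rule this out with Borel's formula for elementary abelian $p$-groups acting on cohomology spheres \cite{borel-seminar}, which reads
\[
  \dim Y-\dim Y^\Gamma \;=\; \sum_{H}\bigl(\dim Y^H-\dim Y^\Gamma\bigr),
\]
the sum being over the index-$p$ subgroups $H$ of $\Gamma$. Each summand is nonnegative, so it is enough to see that the left-hand side is positive. But $r\ge 1$ gives some $g\ne 1$, which acts nontrivially by effectiveness, whence $\dim Y^\Gamma\le\dim Y^{\langle g\rangle}<\dim Y$ by Smith's theorem. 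Hence some summand is positive, i.e. some index-$p$ subgroup $H$ has $\dim Y^H>\dim Y^\Gamma$, as required. Only the positivity of the left-hand side is used, so the precise form of Borel's formula for odd $p$ is immaterial; the entire odd-versus-even dichotomy is instead concentrated in the codimension-parity clause of Smith's theorem invoked in the first step.
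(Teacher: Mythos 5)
Your argument is correct, but it runs along a genuinely different axis from the paper's. The paper inducts on the dimension $n$ of the sphere: it picks a cyclic subgroup $\langle\sigma\rangle$ whose fixed sphere $\cF$ has maximal dimension, splits $\Gamma=\Gamma_{\cF}\oplus\widetilde{\Gamma}$ into the pointwise stabilizer of $\cF$ and a complement, applies the inductive hypothesis to $\widetilde{\Gamma}$ acting on $\cF$ and to $\Gamma_{\cF}$ acting on the link of a top-dimensional simplex of $\cF$, and adds the two estimates via $(m+1)+(m'+1)=n+1$; the odd-$p$ improvement there is generated entirely by the base case $n=0$, and the needed inputs are Wilder's theorem on links, the fact that non-cyclic groups cannot act freely, and Smith's ``permutable periodic transformations'' theorem (to see that $\widetilde{\Gamma}$ acts faithfully on $\cF$). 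You instead induct on the rank $r$, telescoping $c(\Gamma)=c(H)+(\dim Y^H-\dim Y^\Gamma)$ over a corank-one subgroup $H$ whose fixed sphere strictly exceeds $Y^\Gamma$, with Borel's formula supplying the existence of such an $H$ and the even-codimension clause of Smith's theorem supplying the factor of $2$ for odd $p$. This buys you a cleaner induction that never touches links or the simplicial structure (beyond knowing the ambient space is a cohomology sphere), at the cost of invoking Borel's formula, a deeper input than anything the paper uses --- though it is available in the same reference \cite{borel-seminar} the paper already cites. Two points deserve explicit justification rather than the blanket phrase ``stay within this class'': the strict dimension drop for a nontrivial $\bZ/p$-action and the parity of the codimension both require the fixed sets to be cohomology \emph{manifolds} over $\bZ/p$ (so that invariance of domain applies), which is exactly what \cite[Theorem V.2.2]{borel-seminar} provides and what the paper's definition of ``cohomology $n$-sphere over $\bZ/p$'' builds in; with that citation added, your proof is complete.
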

\begin{proof}
It will be convenient, throughout the proof, to denote
  \begin{equation*}
    e=
    \begin{cases}
      1 &\text{ if }p=2\\
      \frac 12 &\text{ otherwise}.
    \end{cases}
  \end{equation*}
  We prove the claim that $r\le e(n+1)$ by induction on $n$ for an action on a simplicial cohomology $n$-sphere $\cP$.
  
  \vspace{.5cm}

  {\bf Base case $n=0$.} We have observed above that cohomology $0$-spheres are just plain $0$-spheres. so the result is obvious: $\bZ/2$ can act effectively by permuting the two points that constitute the $0$-sphere, while $\bZ/p$, $p>2$ cannot.

  \vspace{.5cm}

  {\bf Induction step.} We can assume $r\geq 2$. Since according to \cite[first corollary, p.107]{smth} a non-cyclic abelian group cannot act freely on a cohomology sphere, there must be a cyclic subgroup
  \begin{equation*}
    \bZ/p\cong \langle \sigma\rangle = G\le \Gamma
  \end{equation*}
  with non-empty fixed-point set $\cF$. It is a non-trivial result that the simplicial subcomplex $\cF\subset\cP$ must again be a cohomology $m$-sphere for some $0\le m\le n$:
  \begin{itemize}
  \item It is shown in \cite[\S IV.4.3]{borel-seminar} that $\cF$ has the same $(\bZ/p)$-homology as an $m$-sphere;
  \item \cite[Theorem V.2.2]{borel-seminar} proves that $\cF$ is an orientable cohomology manifold;
  \item \cite[\S I.3.4]{borel-seminar} then shows that its dimension as a complex (and as a cohomology manifold) must be $m$.
  \end{itemize}
  We choose $\sigma$ such that the dimension of $\cF$ is maximal, and consider the pointwise isotropy group $\Gamma_{\cF}$ of $\cF$. Because $\Gamma$ is elementary abelian we have a decomposition
  \begin{equation*}
    \Gamma=\Gamma_{\cF}\oplus \widetilde{\Gamma},
  \end{equation*}
  and $\widetilde{\Gamma}$ acts on $\cF$ (because the latter is the fixed-point set of $\Gamma_{\cF}$ and $\widetilde{\Gamma}$ commutes with the latter). We first claim that this action of $\widetilde{\Gamma}$ on the ($m$-dimensional) cohomology sphere $\cF$ is faithful.

  Indeed, were it not, there would be an element $\widetilde{\sigma}\in \widetilde{\Gamma}$ that fixes a set containing $\cF$. This could not be proper containment  (for that fixed-point set would then be a strictly higher-dimensional cohomology sphere thus contradicting the maximality of $\dim \cF$), and hence
  \begin{equation*}
    \cP^{\widetilde{\sigma}} = \cP^{\sigma} = \cF. 
  \end{equation*}
  But then \cite[Theorem]{smth} would imply that $\widetilde{\sigma}\in \Gamma=\langle\sigma\rangle$, i.e. a contradiction.
  
  But by induction, the faithfulness of the action of $\widetilde{\Gamma}$ on $\cF$ then gives
  \begin{equation}\label{eq:2}
    \rank~\widetilde{\Gamma}\le e(m+1). 
  \end{equation}
  On the other hand, the fact that $\Gamma_{\cF}$ itself acts effectively on all of $\cP$ but trivially on $\cF$ implies that the resulting action of $\Gamma_{\cF}$ on the link of some $m$-simplex in $\cF\subset \cP$ is effective. We observed in the above discussion that said link is a simplicial cohomology sphere of dimension $m'$ satisfying
  \begin{equation}\label{eq:3}
    (m+1)+(m'+1)=n+1. 
  \end{equation}
  The inductive step applied to {\it this} action gives us
  \begin{equation}\label{eq:4}
    \rank~\Gamma_{\cF}\le e(m'+1),
  \end{equation}
  and adding together \Cref{eq:2,eq:4} produces, via \Cref{eq:3}, precisely the desired inequality
  \begin{equation*}
    r=
    \rank~\Gamma_{\cF}+\rank~\widetilde{\Gamma}
    \le
    e(n+1). 
  \end{equation*}
  This finishes the proof. 
\end{proof}

\begin{remark}\label{re:p2}
  As mentioned above, \Cref{pr.simpl-sph} is essentially \cite[second corollary, p.107]{smth}. The latter, though, states only the $2r\le n+1$ inequality, which cannot be correct in full generality: $\bZ/2$, of rank $r=1$, acts on the zero-sphere by the obvious permutation; in that case the inequality would read $2\le 1$.

  There are hints in \cite{smth} that perhaps only odd primes are considered (see e.g. footnote 6 therein), but this does not seem to be stated explicitly.
\end{remark}

\begin{remark}
  The inductive argument in the proof of \Cref{pr.simpl-sph} is very much in the spirit of \cite[Lemmas 2.2 and 2.3]{mz}.  
\end{remark}

As a consequence we have the following corollary.

\begin{corollary}\label{cor.comb}
For each $n$ there is a finite group that cannot be realized as $\Gamma(\cP)$ for any polytope with $\dim \cP<n$.   
\end{corollary}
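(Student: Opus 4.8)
The plan is to combine the lower bound on the dimension of a realizing polytope for elementary abelian $p$-groups (coming from \Cref{pr.simpl-sph}) with the fact that $\cpd$ is essentially monotone and that $\Gamma(\cP)$ can be viewed as a simplicial symmetry group of the boundary sphere of $\cP$. Concretely, I would produce, for each $n$, a specific finite group whose $\cpd$ exceeds $n$, namely a sufficiently large elementary abelian $p$-group.

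First I would recall, via \Cref{re:act-on-sphr}, that if $\cP$ is a convex polytope of dimension $d$, then $\Gamma(\cP)$ acts simplicially and effectively on the barycentric subdivision of $\partial\cP$, which is a simplicial $(d-1)$-sphere. Hence if a finite group $G$ is realized as $\Gamma(\cP)$ for some $d$-dimensional $\cP$, then $G$ acts simplicially and effectively on a simplicial $(d-1)$-sphere. Now fix $n$ and choose the prime $p=2$ together with the elementary abelian group $\Gamma=(\bZ/2)^r$ for $r=n+1$. If some polytope $\cP$ of dimension $d<n$ had $\Gamma(\cP)\cong\Gamma$, then $\Gamma$ would act effectively and simplicially on a simplicial $(d-1)$-sphere, and \Cref{pr.simpl-sph} (the $p=2$ case) would force $r\le (d-1)+1=d<n=r-1+1$, i.e. $r\le d\le n-1$, contradicting $r=n+1$. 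Thus $(\bZ/2)^{n+1}$ cannot be realized as $\Gamma(\cP)$ in dimension less than $n$. (Any $r>n$ works; I take $r=n+1$ for definiteness, though one could even take $r=n$ and still get a contradiction with $d<n$.)

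The one technical point to handle with care is that \Cref{pr.simpl-sph} is stated for an action on a simplicial $n$-sphere, while the object naturally attached to $\partial\cP$ is its barycentric subdivision; but these are the same simplicial sphere up to subdivision, and the $\Gamma(\cP)$-action is by construction simplicial on it, so the hypothesis of \Cref{pr.simpl-sph} is met directly. I should also note effectiveness: the action of $\Gamma(\cP)$ on the face lattice, hence on the barycentric subdivision, is faithful by definition of $\Gamma(\cP)$, so no nontrivial element acts as the identity on the sphere. With these observations the corollary follows immediately, and I do not anticipate a genuine obstacle: the entire content is already packaged in \Cref{pr.simpl-sph}, and the corollary is simply its translation from ``actions on simplicial spheres'' to ``combinatorial symmetry groups of polytopes'' together with the elementary counting $r>n\implies d\ge n$.

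If one wishes to avoid even the mild dependence on the subdivision remark, an alternative is to invoke the near-monotonicity of $\cpd$ stated in the text: one knows $\cpd((\bZ/2)^r)$ grows without bound (again by \Cref{pr.simpl-sph}), so for each $n$ some $(\bZ/2)^r$ has $\cpd\ge n$. Either route yields the statement; I would present the direct argument via \Cref{re:act-on-sphr} and \Cref{pr.simpl-sph}, since it makes explicit which group works for a given $n$.
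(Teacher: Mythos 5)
Your argument is correct and is exactly the paper's proof: the paper derives the corollary as ``immediate from \Cref{pr.simpl-sph} and \Cref{re:act-on-sphr},'' and you have simply made explicit the choice of witness group $(\bZ/2)^{n+1}$ and the resulting dimension count. The points you flag (effectiveness of the action on the barycentric subdivision, passage to the subdivided sphere) are indeed the only details being suppressed, and your handling of them is fine.
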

\begin{proof}
  Immediate from \Cref{pr.simpl-sph} and \Cref{re:act-on-sphr} above, to the effect that $\Gamma(\cP)$ acts effectively on the simplicial sphere $\partial \cP$.
\end{proof}

\Cref{cor.comb} gives an affirmative answer to \cite[Open Question 1]{ssw}. On the other hand, since finite groups of isometries can be regarded as acting effectively and simplicially on simplicial spheres, \Cref{pr.simpl-sph} also proves an affirmative answer to \cite[Open Question 2]{ssw}:

\begin{corollary}\label{cor.geom}
  For each $n$ there is a finite group that cannot be realized as the geometric symmetry group $G(\cP)$ for any polytope with $\dim \cP<n$.  \qedhere
\end{corollary}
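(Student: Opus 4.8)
The plan is to mimic the proof of \Cref{cor.comb}, replacing the combinatorial symmetry group by the geometric one and exploiting the fact that the latter injects into the former. Fix $n$ and take $\Gamma=(\bZ/2)^r$ with $r=n$ (any elementary abelian $p$-group of sufficiently large rank would serve equally well, using the appropriate bound from \Cref{pr.simpl-sph}). I claim that no polytope $\cP$ with $G(\cP)\cong\Gamma$ can have $\dim\cP<n$, which is exactly the assertion.

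First I would reduce to the case that $\cP$ is full-dimensional, so that $d=\dim\cP$ equals the dimension of the ambient space and the vertices of $\cP$ affinely span $\bR^d$; this is the natural normalization in which $G(\cP)$ is finite, and an isometry of $\bR^d$ preserving $\cP$ fixes its centroid and hence may be taken orthogonal. Every such isometry permutes the faces of $\cP$ and thereby induces a combinatorial automorphism, yielding a homomorphism $G(\cP)\to\Gamma(\cP)$. The key point is that this map is injective: an isometry fixing every vertex of a full-dimensional polytope fixes an affine spanning set and is therefore the identity. Consequently $G(\cP)$ acts effectively on the face lattice, and thus, through the identification in \Cref{re:act-on-sphr}, effectively and simplicially on the barycentric subdivision of $\partial\cP$, which is a simplicial $(d-1)$-sphere.

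With this in hand, \Cref{pr.simpl-sph} applied to the effective simplicial action of $\Gamma=(\bZ/2)^r$ on a simplicial $(d-1)$-sphere gives $r\le (d-1)+1=d$. Hence $\dim\cP=d\ge r=n$, contradicting $\dim\cP<n$. Choosing instead an odd prime $p$ and $\Gamma=(\bZ/p)^r$ with $2r\ge n$ would reach the same conclusion via the bound $2r\le d$, so there is latitude in which witnessing group one picks.

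The only genuinely delicate step is verifying that the geometric action on $\partial\cP$ is simultaneously effective and simplicial; once the injectivity of $G(\cP)\hookrightarrow\Gamma(\cP)$ is secured, the remainder is a direct invocation of \Cref{pr.simpl-sph}, exactly as in \Cref{cor.comb}. I expect no obstacle beyond this bookkeeping, since the simplicial-sphere structure on $\partial\cP$ and the effectiveness of face-lattice automorphisms were already assembled in \Cref{re:act-on-sphr}.
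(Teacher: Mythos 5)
Your proof is correct and follows the same route as the paper, which also deduces the corollary by noting that a finite group of geometric symmetries acts effectively and simplicially on the boundary sphere and then invoking \Cref{pr.simpl-sph}; your injectivity argument for $G(\cP)\hookrightarrow\Gamma(\cP)$ just makes explicit what the paper leaves implicit. (The paper additionally remarks that a quicker proof is available via the observation that $(\bZ/2)^r$ does not embed into $\GL(d,\bC)$ for $d<r$, but that is offered as an alternative, not as the official argument.)
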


In fact, the last corollary and the geometric version of \Cref{pr.simpl-sph} follow more swiftly from the observation that an abelian group with minimal number $r$ of generators can not be embedded into $\GL(d,\bC)$ for $d < r$ (see \Cref{th.bd} below, proof of lower bound).


\section{The geometric convex polytope dimension}\label{subse.dim}

\begin{definition}
  Let $\Gamma$ be a group.  Let $\gcpd(\Gamma)$ (the \textit{geometric convex polytope dimension of $\Gamma$}) be the minimum integer $d$ such that there is a convex polytope $\mathcal{P}$ in $\mathbb{R}^d$ such that $G (\mathcal{P}) = \Gamma$.
\end{definition}

\begin{proposition}\label{pr.cpd-leq-gcpd}
  When $\gcpd(\Gamma) \geq 3$, then $\cpd(\Gamma) \leq \gcpd(\Gamma)$.
\end{proposition}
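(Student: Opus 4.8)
The plan is to exploit the elementary fact that every geometric symmetry of a polytope is in particular a combinatorial one, and then to invoke the symmetry-breaking machinery of \Cref{theorem-symmetrybreaks} to cut the combinatorial automorphism group down to exactly $\Gamma$.

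First I would set $d=\gcpd(\Gamma)$ and, using the definition of $\gcpd$, fix a convex $d$-polytope $Q$ in $\bR^d$ with $G(Q)=\Gamma$. The point to record is that an isometry of $\bR^d$ carrying $Q$ onto itself sends faces to faces of the same dimension and preserves incidence, hence induces an automorphism of the face lattice of $Q$; this gives a canonical inclusion $G(Q)\hookrightarrow \Gamma(Q)$. In particular $\Gamma=G(Q)$ is realized as a subgroup of the combinatorial automorphism group $\Gamma(Q)$.

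Next, since by hypothesis $d=\gcpd(\Gamma)\geq 3$, I can apply \Cref{theorem-symmetrybreaks} to the $d$-polytope $Q$ together with the subgroup $\Gamma\leq \Gamma(Q)$ just produced. Part~(a) of that theorem yields a convex $d$-polytope $\cP$ with $\Gamma(\cP)=\Gamma$. By the definition of $\cpd$, this gives $\cpd(\Gamma)\leq d=\gcpd(\Gamma)$, which is exactly the claim.

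There is no serious obstacle here: the argument is a direct reduction, and the two points requiring (only routine) care are the verification that geometric symmetries embed into combinatorial symmetries — so that $\Gamma$ genuinely appears as a subgroup of $\Gamma(Q)$ to which the theorem can be applied — and the hypothesis $d\geq 3$, which is precisely what licenses the use of \Cref{theorem-symmetrybreaks}. This last requirement also explains why the conclusion is stated only under the assumption $\gcpd(\Gamma)\geq 3$.
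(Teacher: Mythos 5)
Your proof is correct and takes essentially the same route as the paper, which likewise obtains the bound by applying \Cref{theorem-symmetrybreaks} to a $d$-polytope $Q$ realizing $\Gamma$ geometrically (the paper cites part~(d) of that theorem, but part~(a), as you use it, already suffices for the purely combinatorial claim). Your observation that $G(Q)$ embeds into $\Gamma(Q)$ is the right point to check, and the injectivity is routine since the vertices of a full-dimensional polytope affinely span $\bR^d$.
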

\begin{proof}
  This follows from \Cref{theorem-symmetrybreaks}, Part~(d).
\end{proof}

\Cref{fig:c4polygones} indicates two constructions of polygons with geometric symmetry group the cyclic group of order $4$.  The same constructions apply to all cyclic groups $\bZ/n$ of order $n \geq3$, so $\gcpd(\bZ/n)=2$.  On the other hand, the combinatorial automorphism group of a polygon is a dihedral group, so $\cpd(\bZ/n)\geq 3$.  By \Cref{theorem-symmetrybreaks} or otherwise, we have $\cpd(\bZ/n) = 3$. Similarly, we have $\gcpd((\bZ/2)^2) = 2$ (take a rectangle), but $\cpd( (\bZ/2)^2) = 3$. It follows that the conclusion of \Cref{pr.cpd-leq-gcpd} is wrong only for cyclic groups of order $\geq 3$, and the Klein four-group $(\bZ/2)^2$.

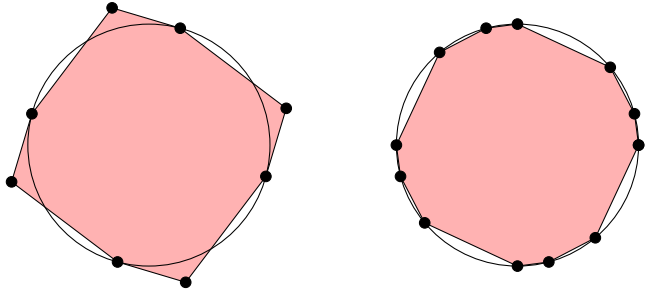
\begin{figure}
\centering
  \begin{tikzpicture}[scale=0.7]
   \tikzset{pnts/.style={circle, draw, fill=black,
                          inner sep=0pt, minimum width=4pt}}      \newcommand{\startangle}{15}
      \newcommand{\rada}{2.7}
      \newcommand{\radb}{2.3}
      
      \foreach \i in {0,1,2,3}{
         \coordinate (A\i) at (90*\i + \startangle:\rada);
         \coordinate (B\i) at (90*\i - \startangle:\radb);
      }
      
      \filldraw[fill=red!30]
        (A0)  node[pnts] {}
        \foreach \i in {1,2,3}{
          -- (B\i) 
              node[pnts] {}
          -- (A\i) 
              node[pnts] {}
        }
        -- (B0) 
           node[pnts]{}
        -- cycle;
      \node [draw, circle through=(B0)] at (0,0) {};
      
      \begin{scope}[xshift=7cm]
         \newcommand{\angleb}{15}
         \newcommand{\anglec}{40}
         \newcommand{\radius}{2.3}
         
         \foreach \i in {0,1,2,3}{
             \coordinate (A\i) at (90*\i:\radius);
             \coordinate (B\i) at (90*\i + \angleb:\radius);
             \coordinate (C\i) at (90*\i + \anglec:\radius);
         }
         
         \filldraw[fill=red!30]    
              (A0) node[pnts] {}
               \foreach \i in {0,1,2,3}{
                   -- (A\i)  node[pnts] {} 
                   -- (B\i)  node[pnts] {}
                   -- (C\i)  node[pnts] {}         
                   }
                   -- cycle;
         \node [draw, circle through=(B0)] at (0,0) {};
      \end{scope}
    \end{tikzpicture}
    \caption{Two polygons with $\bZ/4$ as geometric symmetry group}
    \label{fig:c4polygones}
\end{figure}

For geometric symmetries, we have the following alternative version of \Cref{theorem-symmetrybreaks}, which also works in dimension~$d\leq 2$.  The gist of this result is that we need to add at most one orbit of vertices to break undesired symmetries.  The proof is adapted from an argument by Isaacs~\cite{Isaacs77}.

\begin{proposition}\label{pr.geosymbreak}
  Let $\cQ \subset \bR^d$ be a finite, convex $d$-polytope and $\Gamma$ a subgroup of $G(\cQ)$.  Then there exists an orbit $X$ of $\Gamma$ on $\bR^d$ such that for the polytope $\cP = \conv(\cQ \cup X)$, we have $\Gamma=G(\cP)$.
\end{proposition}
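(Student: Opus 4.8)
The plan is to identify $G(\cP)$ with the isometry group of the finite vertex set of $\cP$, and to choose the orbit $X$ generically so that this isometry group collapses onto $\Gamma$. First I would normalize: translate so that the centroid of the vertex set of $\cQ$ is the origin. Since any isometry preserving $\cQ$ permutes its vertices and hence fixes their centroid, the group $H:=G(\cQ)$ (and so $\Gamma$) is a finite subgroup of $\orth_d(\bR)$. For a bounded $d$-polytope the symmetry group is finite and equals the group of all isometries of $\bR^d$ preserving its vertex set setwise (an isometry permuting the vertices preserves the convex hull, and conversely). As $\cQ$ and any $\Gamma$-orbit $X$ are $\Gamma$-invariant, so is $\cP=\conv(\cQ\cup X)$, giving $\Gamma\le G(\cP)$; the whole content is the reverse inclusion.

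Next I would choose $v\notin\cQ$ lying just beyond the relative interior of a facet, subject to three genericity conditions: (a) $v$ has trivial stabilizer in $H$, possible because the forbidden set $\bigcup_{1\ne h\in H}(\bR^d)^h$ is a finite union of proper subspaces; (b) $v$ is close enough to $\partial\cQ$ and in generic direction that adding the orbit merely ``tents'' facets, so no vertex of $\cQ$ is absorbed and every orbit point is a vertex; and (c) a distance condition stated below. Since $\cQ$ is $\Gamma$-invariant and $v\notin\cQ$, every $\gamma v$ also lies outside $\cQ$; under (a)--(b) the vertex set of $\cP$ is exactly $V_\cQ\sqcup X$ with $X=\Gamma v$ and $|X|=|\Gamma|$.

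The recognition step uses the centroid. Let $c$ be the centroid of $V_\cQ\sqcup X$; every $\phi\in G(\cP)$ fixes $c$. Because $V_\cQ$ is $\Gamma$-invariant with centroid the origin, and the orbit sum $\sum_{\gamma}\gamma v$ is $|\Gamma|$ times the orthogonal projection of $v$ onto $(\bR^d)^\Gamma$, the point $c$ lies in $(\bR^d)^\Gamma$ and is thus $\Gamma$-fixed. Hence $\|\gamma v-c\|=\|\gamma(v-c)\|=\|v-c\|$ for all $\gamma$, so $X$ lies on a single sphere $\{w:\|w-c\|=r\}$ with $r=\|v-c\|$. Condition (c) requires that no vertex of $\cQ$ lies on this sphere, a codimension-one and hence generically avoidable constraint; then the sphere meets the vertex set exactly in $X$. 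Since $\phi$ fixes $c$ and preserves distances to $c$, it preserves this level set, so $\phi(X)=X$, whence $\phi(V_\cQ)=V_\cQ$ and $\phi(\cQ)=\conv(V_\cQ)=\cQ$, i.e. $\phi\in H$. Writing $\phi(v)=\gamma_0 v$ with $\gamma_0\in\Gamma$, the element $\gamma_0^{-1}\phi\in H$ fixes $v$, so it is trivial by (a); therefore $\phi=\gamma_0\in\Gamma$ and $G(\cP)=\Gamma$.

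The main obstacle is the simultaneous genericity in the choice of $v$: I must keep $v$ in the open ``just outside $\cQ$'' regime, so that neither $\cQ$ absorbs the orbit nor the orbit absorbs $\cQ$ (which would reintroduce extra symmetry), while avoiding the finitely many closed bad conditions of (a) and (c). Verifying that ``slightly beyond a facet, in generic direction'' indeed keeps all vertices of $\cQ$ extreme and makes every orbit point a vertex is the one spot requiring a careful, though elementary, convexity argument; once the vertex set is pinned down as $V_\cQ\sqcup X$, the rest is forced by the centroid-sphere recognition and the freeness of the $\Gamma$-action on $v$.
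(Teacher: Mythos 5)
Your proposal is correct and follows essentially the same route as the paper's proof (which is adapted from Isaacs): translate the barycenter to the origin so $G(\cQ)\subset\orth_d(\bR)$, pick a point with trivial $G(\cQ)$-stabilizer just beyond the interior of a facet so the new vertex set is exactly $V_\cQ\sqcup X$, use the fact that the vertex barycenter is $\Gamma$-fixed to place $X$ on a sphere avoiding $V_\cQ$, and conclude via the freeness of the action on the chosen point. The only difference is presentational: where you invoke genericity for the distance condition (c), the paper scales the orbit by a parameter $\lambda$ along a fixed ray and observes that each bad equation $\abs{y-\lambda s}^2=\abs{\lambda x-\lambda s}^2$ is a nontrivial quadratic in $\lambda$, hence excludes only finitely many values.
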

\begin{proof}
  Let $t$ be the translation that sends the barycenter of $\cQ$ to the origin.  Then $G(t(\cQ))=tG(\cQ)t^{-1}$ and $t\Gamma t^{-1}$ is a subgroup of $G(t(\cQ))$.  So after replacing $\cQ$ by a translated copy, we may assume without loss of generality that the barycenter of $\cQ$ is the origin.  In this case, we have $G(\cQ) \subset \orth_d(\bR)$.  Let $Y$ be the vertex set of $\cQ$.  For every $1 \neq g\in G(\cQ)$, we have that $\ker(g-1)$ (the eigenspace of $g$ associated with the eigenvalue~$1$) is a proper subspace of $\bR^d$.  As $\bR^d$ is not the union of finitely many proper subspaces, we can find a vector $x$ such that no non-identity element of $G(\cQ)$ fixes $x$ and such that $x$ is not contained in a proper subspace which is spanned by some subset of $Y$.  Let $X = \Gamma x$ be the $\Gamma$-orbit of $x$.  For $\lambda>0$, set $\cP_{\lambda}= \conv(Y\cup \lambda X)$.  We will show that there is some $\lambda >0$ such that $G(\cP_{\lambda}) = \Gamma$.
   
  Consider the ray $\{\lambda x \colon \lambda >0\}$ spanned by $x$.  As $x$ is not contained in a proper subspace spanned by a subset of $Y$, this ray meets the boundary of $\cQ$ in the interior of a facet.  Thus there is a nonempty interval $I$ of positive real numbers such that the vertex set of $\cP_{\lambda}= \conv(Y\cup \lambda X)$ is exactly $Y\cup \lambda X$ for all $\lambda\in I$.
   
  Set $s = (1/\lvert G \rvert) \sum_{g\in \Gamma} gx$.  The barycenter of $\cP_{\lambda}$ is $\lambda s$.  For any $y\in Y$, there are at most two $\lambda$'s that solve the quadratic equation $\abs{ y-\lambda s}^2 = \abs{ \lambda x - \lambda s}^2$.  As $Y$ is finite, there are infinitely many $\lambda \in I$ such that $\abs{y-\lambda s} \neq \abs{ \lambda x - \lambda s}$ for all $y\in Y$.  We claim that $G(\cP_{\lambda}) = \Gamma$ for these $\lambda$.  By construction, $\Gamma \leq G(\cP_{\lambda})$ for any $\lambda$.  As $Y\cup \lambda X$ is the vertex set of $\cP_{\lambda}$, the isometry group $G(\cP_{\lambda})$ is the set of isometries mapping $Y \cup \lambda X$ to itself.  By the choice of $\lambda$, we have that $G(\cP_{\lambda})$ stabilizes $\lambda X$, and thus also $Y$.  Therefore, $G(\cP_{\lambda}) \subseteq G(\cQ)$.  By the choice of $x$, no non-identity element of $G(\cQ)$ fixes $x$.  Thus $\abs{G(\cP_{\lambda})} = \abs{G(\cP_{\lambda})x} \leq \abs{X} \leq \abs{\Gamma} $.  Since $\Gamma\subseteq G(\cP_{\lambda})$ we have $G(\cP_{\lambda}) = \Gamma$.
\end{proof}

Isaacs also showed that when some finite group $\Gamma \subset \orth_d(\bR)$ acts \emph{absolutely irreducibly} on $\bR^d$, then $\Gamma$ is the geometric symmetry group of a vertex-transitive polytope.  This can be generalized to some groups not necessarily acting absolutely irreducibly \cite[Corollary~5.8]{FrieseLadisch18}, but not to arbitrary subgroups of $\orth_d(\bR)$.

\begin{corollary}\label{cor.gcpdlin}
  Let $\Gamma$ be a finite group.  Then $\gcpd(\Gamma)$ equals the smallest $d$ such that $\Gamma$ embeds into $\GL(d,\bR)$.
\end{corollary}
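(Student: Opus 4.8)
The plan is to prove the two inequalities separately. Write $m(\Gamma)$ for the smallest $d$ admitting a faithful representation $\Gamma\hookrightarrow\GL(d,\bR)$; the claim is that $\gcpd(\Gamma)=m(\Gamma)$. For the direction $\gcpd(\Gamma)\ge m(\Gamma)$, I would start from a polytope $\cP\subset\bR^d$ with $G(\cP)=\Gamma$ and $d=\gcpd(\Gamma)$. Since $G(\cP)$ is a finite group of isometries, it fixes the barycenter of $\cP$; translating that barycenter to the origin turns every element of $G(\cP)$ into a linear isometry, exhibiting $\Gamma\cong G(\cP)$ as a subgroup of $\orth_d(\bR)\le\GL(d,\bR)$. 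Hence $m(\Gamma)\le d=\gcpd(\Gamma)$. This direction is routine.

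For the reverse inequality $\gcpd(\Gamma)\le m(\Gamma)$, set $d=m(\Gamma)$ and fix a faithful representation $\Gamma\to\GL(d,\bR)$. By \Cref{pr.inv_innprod} I may conjugate it into $\orth_d(\bR)$, so I regard $\Gamma$ as a finite subgroup of $\orth_d(\bR)$ acting on $\bR^d$ by linear isometries. The task is then to produce a full-dimensional polytope in $\bR^d$ whose geometric symmetry group is exactly $\Gamma$. First I would build an auxiliary polytope carrying \emph{at least} the symmetries in $\Gamma$: choose points $x_0,\dots,x_d\in\bR^d$ in general position (so that they are the vertices of a $d$-simplex) and let $F=\bigcup_i \Gamma x_i$ be the union of their $\Gamma$-orbits. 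Then $\Gamma$ permutes $F$ by isometries, so $\Gamma\le G(\cQ)$ for $\cQ=\conv(F)$, and since $F$ contains the affinely independent set $\{x_0,\dots,x_d\}$, the polytope $\cQ$ is full-dimensional. Applying \Cref{pr.geosymbreak} to $\cQ$ and the subgroup $\Gamma\le G(\cQ)$ then yields a $\Gamma$-orbit $X$ with $G(\conv(\cQ\cup X))=\Gamma$, so $\gcpd(\Gamma)\le d=m(\Gamma)$. When $d\ge 3$ one can bypass this last step and appeal directly to \Cref{cor.geosym-givengroup}, which already delivers a $d$-polytope $\cP$ with $G(\cP)=\Gamma$.

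The main point requiring care is the low-dimensional regime: \Cref{cor.geosym-givengroup} is available only for $d\ge 3$, whereas the construction above must also cover $d\le 2$. This is precisely where I rely on \Cref{pr.geosymbreak} being valid for small $d$, so that it can still supply the symmetry-breaking orbit when $d=2$. The genuinely degenerate cases $d\le 1$, where $\Gamma$ is forced to be trivial or $\bZ/2$, I would simply verify by hand: a segment realizes $\bZ/2$ in $\bR^1$, and the trivial group is realized in dimension $m(1)$ by a single point, consistent with the convention for $\GL(0,\bR)$. Apart from this edge, the argument is uniform, and its only substantive ingredients are that a minimal faithful real representation can be taken orthogonal (\Cref{pr.inv_innprod}) together with the symmetry-breaking mechanism (\Cref{pr.geosymbreak}); no further input is needed.
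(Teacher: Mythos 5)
Your proposal is correct and follows essentially the same route as the paper: the easy direction via translating the barycenter to the origin, and the converse via \Cref{pr.inv_innprod} to orthogonalize, an auxiliary full-dimensional $\Gamma$-invariant polytope $\cQ$ with $\Gamma\le G(\cQ)$ (the paper uses a large union of orbits on the unit sphere rather than orbits of $d+1$ affinely independent points, but this is immaterial), and then \Cref{pr.geosymbreak} to kill extra symmetries, with \Cref{cor.geosym-givengroup} available when $d\ge 3$. Your explicit treatment of the degenerate cases $d\le 1$ is a harmless addition the paper leaves implicit.
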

\begin{proof}
  When $\Gamma\cong G(\cP)$ for some $d$-polytope~$\cP$, then $\Gamma$ embeds into $\orth_d(\bR) \subset \GL(d,\bR)$.
   
  Conversely, suppose $\Gamma$ embeds into $\GL(d,\bR)$.  By \Cref{pr.inv_innprod}, it follows that $\Gamma$ is isomorphic to a subgroup~$\widetilde{\Gamma}$ of $\orth_d(\bR)$.  As in the proof of \Cref{cor.geosym-givengroup}, we can find a polytope $\cQ$ such that $G(\cQ)$ contains the given subgroup $\widetilde{\Gamma}$ isomorphic to $\Gamma$.  Then by \Cref{pr.geosymbreak} (or \Cref{cor.geosym-givengroup} when $d\geq 3$), $\widetilde{\Gamma}$ is the isometry group of some $d$-polytope.
\end{proof}

\begin{remark}
  When $d$ is minimal such that the given group $\Gamma$ embeds into $\GL(d,\bR)$, then one can actually show that $\bR^d$ can be generated by a single $\Gamma$-orbit on $\bR^d$.  It follows that there is a $d$-polytope~$\cP$ with $\Gamma\cong G(\cP)$ and such that $\Gamma$ has at most two orbits on the set of vertices of $\cP$.  On the other hand, not every group is isomorphic to the geometric automorphism group of a vertex-transitive polytope~\cite{Babai77}.
\end{remark}

As an example, we can compute $\gcpd$ for abelian groups.

\begin{theorem}\label{th.bd}
  Let
  \begin{equation}\label{eq:1}
    \Gamma=\bZ/n_1\times\cdots\times \bZ/n_r
  \end{equation}
  be an abelian group, where
  \begin{equation*}
    n_1 \mid n_2 \mid \cdots \mid n_r
  \end{equation*}
  are the invariant factors of $\Gamma$. Let $s$ be the number of factors $n_i$ such that $n_i=2$ and let $t$ the number of factors $n_i$ such that $n_i>2$ (so $r=s+t$).  Then $\gcpd(\Gamma) =s+2t$, that is, the minimal dimension of a polytope $\cP$ such that $G(\cP)\cong \Gamma$ is $s+2t$.
\end{theorem}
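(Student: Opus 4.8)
The plan is to use \Cref{cor.gcpdlin}, which identifies $\gcpd(\Gamma)$ with the least $d$ for which $\Gamma$ admits a faithful $d$-dimensional real representation, and then to solve the resulting representation-theoretic minimization directly. Write $\widehat{\Gamma}$ for the character group of $\Gamma$; since $\Gamma$ is abelian, $\widehat{\Gamma}\cong\Gamma$ has the same invariant factors $n_1\mid\cdots\mid n_r$, of which $s$ equal $2$ and $t$ exceed $2$. Because $\Gamma$ is abelian, every real irreducible representation is either one-dimensional, given by a character of order at most $2$, or two-dimensional, given by a conjugate pair $\{\chi,\overline{\chi}\}$ of characters of order $>2$. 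Decomposing a faithful representation into real irreducibles and discarding repetitions and the trivial character, I would record it as a collection of $a$ distinct order-$2$ characters $\chi_1,\dots,\chi_a$ and $b$ characters $\psi_1,\dots,\psi_b$ of order $>2$ (one per conjugate pair); its dimension is $a+2b$. Faithfulness means $\bigcap_i\ker\chi_i\cap\bigcap_j\ker\psi_j=1$, which by duality for finite abelian groups is exactly the condition that these characters generate $\widehat{\Gamma}$. Thus $\gcpd(\Gamma)$ is the minimum of $a+2b$ over all such generating families.

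For the upper bound I would exhibit one such family of the right size: take the $r$ coordinate characters dual to the invariant-factor decomposition $\widehat{\Gamma}\cong\bZ/n_1\times\cdots\times\bZ/n_r$. These generate $\widehat{\Gamma}$, exactly $s$ of them have order $2$ and $t$ have order $>2$, so the associated real representation (a sign representation for each order-$2$ character and a planar rotation by $2\pi/n_i$ for each order-$n_i$ one) is faithful of dimension $s+2t$, whence $\gcpd(\Gamma)\le s+2t$.

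The heart of the argument is the matching lower bound, which I would obtain from two separate counting estimates applied to any generating family as above. First, since $\widehat{\Gamma}$ has $r$ invariant factors, its minimal number of generators is $r$, so $a+b\ge r=s+t$. Second, I would apply the doubling endomorphism $\chi\mapsto 2\chi$ of $\widehat{\Gamma}$: it kills each order-$2$ character $\chi_i$, while the images $2\psi_1,\dots,2\psi_b$ must generate the subgroup $2\widehat{\Gamma}$. A direct computation of invariant factors shows that doubling annihilates the $s$ factors $\bZ/2$ and sends each factor $\bZ/n_i$ with $n_i>2$ to the nontrivial group $\bZ/(n_i/\gcd(2,n_i))$, so $2\widehat{\Gamma}$ has exactly $t$ invariant factors and hence minimal number of generators $t$; this forces $b\ge t$. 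Adding the two inequalities gives $a+2b=(a+b)+b\ge (s+t)+t=s+2t$, completing the lower bound.

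I expect the main obstacle to be the bookkeeping in the lower bound rather than any single hard step: one must check that the doubling map really does kill precisely the order-$\le 2$ part while leaving all $t$ large invariant factors nontrivial (so that $2\widehat{\Gamma}$ has minimal number of generators exactly $t$, using that the surviving factors again form a divisibility chain), and that the passage between a faithful real representation and a generating family of characters preserves faithfulness after discarding trivial and repeated summands. The real-irreducible classification for abelian groups and \Cref{pr.inv_innprod} handle the representation-theoretic side, so the remaining content is the elementary but essential observation that the two constraints $a+b\ge r$ and $b\ge t$ combine to give exactly $a+2b\ge s+2t$.
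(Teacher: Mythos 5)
Your proposal is correct and follows essentially the same route as the paper: both reduce via \Cref{cor.gcpdlin} to minimizing the dimension of a faithful real representation, exhibit the same $(s+2t)$-dimensional representation for the upper bound, and obtain the lower bound by combining the count that $\widehat{\Gamma}$ needs $r$ generators with the count that at least $t$ of the constituent characters must have order greater than $2$ (each such character contributing an extra real dimension via its conjugate). Your doubling endomorphism $\chi\mapsto 2\chi$ of $\widehat{\Gamma}$ is precisely the dual formulation of the paper's restriction of characters to $2\Gamma$, so the two arguments coincide up to bookkeeping.
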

\begin{proof}
  By \Cref{cor.gcpdlin}, we need to show that $d=s+2t$ is the minimum dimension of a faithful linear representation $\rho\colon \Gamma \to \GL(d,\bR)$.  The fact that this is a sharp bound entails two inequalities, which we prove separately.

  \vspace{.3cm}
  
  {\bf Lower bound.}  Suppose the linear representation $\rho\colon \Gamma\to \GL(d,\bR)$ is faithful.  So when decomposing the complexification of $\rho$ (denoted by the same symbol, for brevity) as a sum of irreducible (hence one-dimensional) characters, the summands $\chi_i$ must generate the Pontryagin dual group
  \begin{equation*}
    \widehat{\Gamma}:=\Hom(\Gamma,\bS^1)\cong \Gamma. 
  \end{equation*}
  It follows from this that there is, among the $\chi_i$, 
  a minimal set of generators $\chi_1$ up to $\chi_h$,
  so that $r \leq h \leq d$. 
  Minimality implies that no two $\chi_i$ and $\chi_j$, 
  $1\le i\ne j\le h$ can be mutually conjugate (since conjugation means taking the inverse in $\widehat{\Gamma}$).  

All self-conjugate $\chi_i=\overline{\chi_i}$ are trivial on the $t$-factor abelian group $2\Gamma$, so among the $h$ $\chi_i$ we must have at least $t$ non-self-conjugate characters. But because $\rho$ is a real representation, the $t$ conjugates $\overline{\chi_i}$ must be among the summands of $\rho$ as well, meaning that 
\begin{equation*}
  \dim \rho\ge h+t \ge r+t=s+2t,
\end{equation*}
as claimed. 

  \vspace{.3cm}

  {\bf Upper bound.} Fix an isomorphism
  \begin{equation*}
    \widehat{\Gamma}\cong \Gamma\cong \bZ/n_1\times\cdots\times \bZ/n_r
  \end{equation*}
  and select a set $\chi_i$, $1\le i\le r$ of generators for the $r$ factors. The $\chi_i$ of order two ($s$ of them, in the notation of the statement) are realizable over the real numbers, while the rest can be regarded as representations on $\bR^2\cong \bC$. Summing the characters we thus obtain an orthogonal representation of $\Gamma$ on $\bR^{s+2t}$, which is faithful because its simple constituents generate the dual group.
\end{proof}

We have similar sharp bounds in the centrally symmetric case:

\begin{theorem}\label{th.bd-sym}
  Let $\Gamma$ be a finite abelian group as in \Cref{eq:1}, $\sigma\in \Gamma$ a non-trivial involution, and $s$ and $t$ as in the statement of \Cref{th.bd}. When $\sigma$ is not a square in $\Gamma$, then the conclusion of \Cref{th.bd} holds for polytopes equipped with $\Gamma$-actions with $\sigma\mapsto -1$.  When $\sigma $ is a square, then the minimal dimension of a polytope $\cP$ with geometric symmetry group $\Gamma$ such that $\sigma$ acts as $-1$ is $2r=2(s+t)$.
\end{theorem}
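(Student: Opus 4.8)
The plan is to follow the strategy of \Cref{th.bd}: reduce to representation theory and then count dimensions of characters. A centrally symmetric $d$-polytope $\cP$ with $G(\cP)\cong\Gamma$ on which $\sigma$ acts as the central symmetry amounts exactly to a faithful orthogonal representation $\rho\colon\Gamma\to\orth_d(\bR)$ with $\rho(\sigma)=-\mathrm{id}$ that is realizable as a geometric symmetry group. Indeed, from such a $\rho$ the construction behind \Cref{cor.geosym-givengroup} (or \Cref{pr.geosymbreak} in low dimensions) produces a polytope with $G(\cP)=\Gamma$, and since $-\mathrm{id}=\rho(\sigma)$ lies in the group, $\cP$ is automatically centrally symmetric with $\sigma$ as its central symmetry; the converse is clear, and by \Cref{pr.inv_innprod} we may pass freely between real and orthogonal representations. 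So I must compute the least dimension of a faithful real representation $\rho$ of $\Gamma$ with $\rho(\sigma)=-\mathrm{id}$. Decomposing the complexification into characters $\chi_i\in\widehat{\Gamma}$, the constraints read: $\rho(\sigma)=-\mathrm{id}$ forces every $\chi_i$ into the conjugation-stable coset $C:=\{\chi:\chi(\sigma)=-1\}$; reality forces the multiset $\{\chi_i\}$ to be closed under $\chi\mapsto\overline\chi$; and faithfulness forces $\langle\chi_i\rangle=\widehat\Gamma$. The decisive point is that a self-conjugate character is one of order $\le 2$, and such a character lies in $C$ iff some order-two character is $-1$ on $\sigma$, i.e. iff $\sigma\notin 2\Gamma$. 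Writing $\sigma=(a_1,\dots,a_r)$ with $a_i\in\{0,n_i/2\}$, one has $\sigma\in 2\Gamma$ precisely when every coordinate with $a_i=n_i/2$ satisfies $4\mid n_i$; thus $\sigma$ is a non-square exactly when some active coordinate $i^\ast$ has $n_{i^\ast}\equiv 2\pmod 4$.

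If $\sigma$ is a square, then $C$ contains only characters of order $>2$, so every admissible constituent set is a disjoint union of conjugate pairs $\{\chi,\overline\chi\}$, and each such pair generates the same cyclic group as $\chi$ alone. Generating the rank-$r$ group $\widehat\Gamma$ hence requires at least $r$ pairs, giving $\dim\rho\ge 2r$. For the reverse inequality I would fix an active standard generator $\psi_{j_0}\in C$ and replace each standard generator $\psi_i\notin C$ by $\psi_i+\psi_{j_0}$; the new characters all lie in $C$ and still generate $\widehat\Gamma$ (one recovers $\psi_i=(\psi_i+\psi_{j_0})-\psi_{j_0}$), so $\bigoplus_i(\chi_i\oplus\overline{\chi_i})$ is faithful, real, of dimension $2r$, with $\rho(\sigma)=-\mathrm{id}$.

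If $\sigma$ is not a square, the lower bound $\dim\rho\ge s+2t$ is immediate: it is the lower bound already proved in \Cref{th.bd} for arbitrary faithful real representations, which the extra constraint can only strengthen. The real content is the upper bound, for which I need $s$ self-conjugate (order-two) characters and $t$ conjugate pairs, all lying in $C$ and jointly generating $\widehat\Gamma$. Here the character $\mu:=\psi_{i^\ast}^{\,n_{i^\ast}/2}$ has order two and $\mu(\sigma)=(-1)^{n_{i^\ast}/2}=-1$, so $\mu\in C$ (and $\mu=\psi_{i^\ast}$ when $n_{i^\ast}=2$). I would then correct each standard generator $\psi_i\notin C$ to $\psi_i+\mu$: this lands in $C$, leaves order-two generators of order two and higher-order generators of order $>2$, and preserves generation because $\mu$ is a power of a retained generator. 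Summing the resulting $s$ one-dimensional real characters and $t$ two-dimensional conjugate pairs gives a faithful real representation of dimension $s+2t$ with $\rho(\sigma)=-\mathrm{id}$, matching the lower bound.

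The main obstacle, in both cases, is the upper bound: one must produce a minimal generating set of $\widehat\Gamma$ contained in the coset $C$ while controlling how many of its members are self-conjugate. This is exactly what the square/non-square dichotomy regulates, and the key device is the translation trick --- shifting each offending generator by a fixed element of $C$ (the active generator $\psi_{j_0}$ in the square case, the order-two character $\mu$ in the non-square case) --- which simultaneously moves generators into $C$, preserves generation, and keeps their orders under control.
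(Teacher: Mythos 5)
Your proposal is correct and follows essentially the same route as the paper: the lower bounds come from the character-counting argument of \Cref{th.bd} (plus the observation that when $\sigma$ is a square every character with $\chi(\sigma)=-1$ is non-self-conjugate, forcing $2r$ summands), and the upper bounds come from twisting a standard generating set of $\widehat\Gamma$ by a fixed character taking the value $-1$ on $\sigma$. The only cosmetic difference is that you produce that twisting character explicitly from the invariant factors (as $\mu=\psi_{i^\ast}^{n_{i^\ast}/2}$), whereas the paper obtains it from the splitting $\Gamma=\langle\sigma\rangle\times\Gamma_0$; these are equivalent.
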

\begin{proof}
  In any case, the fact that $\dim\cP\ge s+2t$ follows from \Cref{th.bd}, since the actions with $\sigma\mapsto -1$ form a subclass of isometric actions.
  
  When $\sigma$ is not a square in $\Gamma$, then we can write $\Gamma = \langle \sigma \rangle \times \Gamma_0$.  Thus there is a real character $\chi$ taking value $-1$ on $\sigma$.  Now choose generators $\chi_1=\chi$, $\chi_2$, $\dotsc$, $\chi_r$ as before. By replacing $\chi_i $ by $\chi\chi_i$ if necessary, we can ensure that $\chi_i(\sigma)=-1$ for all $i$.  Thus again the lower bound is achieved.
  
  Now suppose that $\sigma$ is a square in $\Gamma$. Then for every irreducible character $\chi$ taking value $-1$ on $\sigma$ we necessarily have $\chi\neq \overline{\chi}$. Thus in this case a faithful representation $\rho\colon \Gamma\to \GL(d,\bR)$ with $\rho(\sigma)=-I$ has at least $2r=2(s+t)$ summands.  Conversely, we can find a generating set $\chi_1$, $\dotsc$, $\chi_r$ with $\chi_i(\sigma)=-1$ for all $i$ (as above), so the bound is achieved.
\end{proof}

Going back to $\cpd$, we now know enough to determine it precisely for elementary abelian $p$-groups. 

\begin{theorem}\label{th:cpd-sharp}
  Let $p$ be a prime. For $\Gamma=(\bZ/p)^r$ we have $\cpd(\Gamma)=\gcpd(\Gamma)$ except in the following cases:
  \begin{itemize}
  \item $r=1$ and $p>2$, when $\cpd(\Gamma)=3$ and $\gcpd(\Gamma)=2$;
  \item $r=2$ and $p=2$, when again $\cpd(\Gamma)=3$ and $\gcpd(\Gamma)=2$.
  \end{itemize}
\end{theorem}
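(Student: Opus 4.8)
The plan is to sandwich $\cpd(\Gamma)$ between the lower bound supplied by \Cref{pr.simpl-sph} and the upper bound $\cpd(\Gamma)\le\gcpd(\Gamma)$ of \Cref{pr.cpd-leq-gcpd}, and then to dispose of the finitely many small-rank cases separately. The first step is pure bookkeeping: applying \Cref{th.bd} to $\Gamma=(\bZ/p)^r$, whose invariant factors are all equal to $p$, gives $\gcpd(\Gamma)=r$ when $p=2$ (so that $s=r$, $t=0$) and $\gcpd(\Gamma)=2r$ when $p>2$ (so that $s=0$, $t=r$).

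For the lower bound I would show $\cpd(\Gamma)\ge\gcpd(\Gamma)$ in every case. If $\cP$ is a $d$-polytope with $\Gamma(\cP)=\Gamma$, then by \Cref{re:act-on-sphr} the group $\Gamma$ acts effectively and simplicially on the barycentric subdivision of $\partial\cP$, which is a simplicial $(d-1)$-sphere. \Cref{pr.simpl-sph}, taken with $n=d-1$, then yields $r\le d$ if $p=2$ and $2r\le d$ if $p>2$; in both cases this reads $d\ge\gcpd(\Gamma)$. Minimizing over all such $\cP$ gives $\cpd(\Gamma)\ge\gcpd(\Gamma)$.

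The upper bound is where the case division enters. One checks that $\gcpd(\Gamma)\ge 3$ holds exactly when $p=2$ and $r\ge 3$, or when $p>2$ and $r\ge 2$. For every such $\Gamma$, \Cref{pr.cpd-leq-gcpd} applies and gives $\cpd(\Gamma)\le\gcpd(\Gamma)$; combined with the lower bound this yields the claimed equality $\cpd(\Gamma)=\gcpd(\Gamma)$ and settles all of the non-exceptional instances at once.

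What remains are the cases with $\gcpd(\Gamma)\le 2$, where \Cref{pr.cpd-leq-gcpd} is unavailable, namely $p=2$, $r=1$; $p=2$, $r=2$; and $p>2$, $r=1$. When $p=2$, $r=1$ we have $\Gamma=\bZ/2$ with $\gcpd(\Gamma)=1=\cpd(\Gamma)$ (a segment realizes it), so equality persists and this is not an exception. For the Klein four-group ($p=2$, $r=2$) and for $\bZ/p$ with $p>2$ ($r=1$), both have $\gcpd(\Gamma)=2$; each embeds into $\orth_3(\bR)$, so \Cref{cor.geosym-givengroup} realizes it as $\Gamma(\cP)$ for a suitable $3$-polytope, giving $\cpd(\Gamma)\le 3$. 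On the other hand neither group is dihedral of order $\ge 6$ nor of order $\le 2$, so by the classification of small-$\cpd$ groups recalled in \Cref{subse.1-2} neither can be realized in dimension $\le 2$; hence $\cpd(\Gamma)=3$, and these are the two genuine exceptions. The only subtlety worth flagging is that the sandwich is tight precisely because the numerical bound of \Cref{pr.simpl-sph} matches the $\gcpd$ formula of \Cref{th.bd} exactly; the exceptions are exactly the low ranks where $\gcpd<3$, so that the combinatorial minimum is pushed up to $3$ by the dihedral obstruction in dimension two.
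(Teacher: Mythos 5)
Your proposal is correct and follows essentially the same route as the paper: the lower bound $\cpd(\Gamma)\ge\gcpd(\Gamma)$ comes from combining \Cref{pr.simpl-sph} (via the effective simplicial action on $\partial\cP$ from \Cref{re:act-on-sphr}) with the $\gcpd$ values of \Cref{th.bd}, the upper bound from \Cref{pr.cpd-leq-gcpd} when $\gcpd(\Gamma)\ge 3$, and the low-rank exceptions are handled exactly as in the discussion following \Cref{pr.cpd-leq-gcpd}. The only difference is that you spell out the exceptional cases (including $\bZ/2$, which is not an exception) in slightly more detail than the paper, which simply refers back to that earlier discussion.
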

\begin{proof}
  By \Cref{pr.simpl-sph}, the dimension $n+1$ of a polytope $\cP$ on whose boundary spherical simplicial complex $\partial\cP$ the group $\Gamma$ can act effectively is at least the value of $\gcpd$ as computed in \Cref{th.bd} (namely $r$ when $p=2$ and $2r$ otherwise). In short, for elementary abelian $p$-groups $\Gamma$ we always have
  \begin{equation}\label{eq:5}
    \cpd(\Gamma)\ge \gcpd(\Gamma). 
  \end{equation}
  The two exceptions in the statement were noted and analyzed in the discussion following \Cref{pr.cpd-leq-gcpd}. As for the other cases, \Cref{th.bd} says that we then have $\gcpd(\Gamma)\ge 3$ and hence $\cpd(\Gamma)\le \gcpd(\Gamma)$ by \Cref{pr.cpd-leq-gcpd}. Combining this with \Cref{eq:5} gives the conclusion.
\end{proof}

In principle, \Cref{cor.gcpdlin} allows us to compute $\gcpd(\Gamma)$ from the character table of $\Gamma$.  A few more (elementary) examples which show that $\gcpd(\Gamma)$ can be arbitrarily large:
\begin{theorem}\label{th.bd_sn_etal}
  \begin{enumerate}
  \item $\gcpd(S_n) = n-1$, where $S_n$ is the symmetric group on $n$ letters.
  \item Let $\Gamma$ be an extraspecial $p$-group of order $p^{2k+1}$, $p$ odd.  Then $\gcpd(\Gamma) = 2 p^k$.
  \item Let $\Gamma = \Aff(\bF_q)$ be the affine group of degree~$1$ over the field $\bF_q$ with $q$ elements, that is, the set of all maps $\bF_q\to\bF_q$ of the form $x\mapsto ax+b$, $a$, $b\in \bF_q$, $a\neq 0$.  Then $\gcpd(\Gamma)= q-1$.
  \end{enumerate}
\end{theorem}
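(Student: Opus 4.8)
The unifying strategy is to invoke \Cref{cor.gcpdlin}, which reduces the computation of $\gcpd(\Gamma)$ to finding the minimal dimension of a faithful \emph{real} representation of $\Gamma$. For each of the three groups I would first read off the irreducible complex characters from standard representation theory, then pin down the minimal faithful real representation by a two-sided argument: an upper bound exhibiting an explicit faithful real representation of the claimed dimension, and a matching lower bound showing that every faithful representation must contain a prescribed constituent (or a conjugate pair of constituents). The recurring engine for the lower bound is a kernel analysis: a representation is faithful exactly when it is nontrivial on every minimal normal subgroup, so I would identify the minimal normal subgroups of $\Gamma$ and locate the smallest irreducible constituents on which they act nontrivially.

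For (a) the upper bound is the standard $(n-1)$-dimensional real representation (the permutation action of $S_n$ on $\bR^n$ restricted to the sum-zero hyperplane), which is faithful. For the lower bound, when $n\ge 5$ I would use that the normal subgroups of $S_n$ are only $1$, $A_n$, $S_n$: since the trivial and sign characters both contain $A_n$ in their kernel, a faithful representation must have an irreducible constituent whose kernel is neither $A_n$ nor $S_n$, hence is trivial; as the smallest nontrivial irreducible degree of $S_n$ is $n-1$ (the standard representation), this forces $\dim\ge n-1$. The cases $n\le 4$ I would settle by hand, the only mild subtlety being $S_4$, whose $2$-dimensional irreducible factors through the quotient $S_4\to S_3$ by the normal Klein four-group and so is not faithful; together with the two linear characters this still forces $\dim\ge 3=n-1$.

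For (b) and (c) the Frobenius--Schur indicator is the governing invariant, and the two cases contrast instructively. An extraspecial $p$-group $\Gamma$ of order $p^{2k+1}$ has centre $Z\cong\bZ/p$ as its unique minimal normal subgroup, and its faithful irreducible characters are exactly the $p-1$ characters of degree $p^k$ on which $Z$ acts through a nontrivial central character. Since $p$ is odd these central values are non-real, so each such $\chi$ satisfies $\chi\neq\overline{\chi}$; realifying one of them yields a faithful real representation of dimension $2p^k$ (upper bound), while the complexification of any faithful real representation must contain some such $\chi$ together with $\overline{\chi}$, giving $\dim\ge 2p^k$ (lower bound). For $\Aff(\bF_q)=\bF_q\rtimes\bF_q^{\times}$, the translation subgroup $N=\bF_q$ is the unique minimal normal subgroup, and the only irreducible on which $N$ acts nontrivially is the single character of degree $q-1$, which I would identify as the nontrivial constituent of the doubly transitive permutation action of $\Gamma$ on the affine line $\bF_q$. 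Being a summand of a rational permutation representation, this character is realizable over $\bR$, so here there is \emph{no} doubling and both bounds give $q-1$.

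The main obstacle is precisely this reality bookkeeping. The bare complex degrees would suggest $p^k$ for the extraspecial group and $q-1$ for the affine group; the real answer doubles in the first case but not the second, and distinguishing them requires computing the Frobenius--Schur indicator correctly — most cleanly by exhibiting the affine-group character inside a permutation representation (indicator $+1$) while ruling out reality of the extraspecial character through its non-real central character (indicator $0$). The remaining points — the determination of the minimal normal subgroups and of the character degrees — are routine, following from standard facts about extraspecial groups and about semidirect products with an abelian normal subgroup on whose dual the complement acts with a single nontrivial orbit.
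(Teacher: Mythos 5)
Your proposal is correct and follows essentially the same route as the paper: reduce via \Cref{cor.gcpdlin} to the minimal dimension of a faithful real representation, use the known irreducible degrees of $S_n$, extraspecial $p$-groups, and $\Aff(\bF_q)$, and handle the reality question (doubling for the non-real degree-$p^k$ characters, no doubling for the rational degree-$(q-1)$ character). Your kernel/minimal-normal-subgroup phrasing of the lower bounds and the permutation-representation justification of rationality in case (c) are just slightly more explicit versions of the arguments the paper leaves implicit or cites.
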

\begin{proof}
  \begin{enumerate}
  \item $S_n$ is the geometric (and combinatorial) automorphism group of a regular $(n-1)$-simplex.  The one-dimensional characters of $S_n$ have the alternating group in the kernel.  For $n\neq 4$, the symmetric group $S_n$ has no irreducible representations of degree between $1$ and $n-1$ \cite[pp.~466--468]{BurnsideTGFO}, and for $n=4$, the irreducible representation of degree~$2$ is not faithful.  Thus $\gcpd(S_n)=n-1$.
  \item The irreducible character degrees of an extraspecial $p$-group are $1$ and $p^k$ \cite[7.6(b)]{huppCT}.  The one-dimensional characters have the commutator subgroup in the kernel, so are not faithful.  Every character $\chi$ of degree $p^k$ is faithful, but $\chi\neq \overline{\chi}$ as $p$ is odd.  The representation $\Gamma\to \GL(p^k,\bC)$ affording $\chi$ can be viewed as a representation $\Gamma \to \GL(2p^k,\bR)$, so $\gcpd(\Gamma)=2p^k$.
  \item The affine group is not abelian and has only one nonlinear character, of degree~$q-1$ \cite[7.9(c)]{huppCT}.  This character is afforded by a representation over $\bR$ (in fact over $\bQ$), so $\gcpd(\Gamma)=q-1$.
  \end{enumerate}
\end{proof}



\section{Some open questions}
\label{sec.openqu}

By the results in the last section, we can (in principle) compute the geometric convex polytope dimension $\gcpd(\Gamma)$ of a group using representation theory.  For the convex polytope dimension, the situation is less clear. We have sharp bounds for elementary abelian $p$-groups by \Cref{th:cpd-sharp}, but the analogous question arises naturally for other families:

\begin{question}\label{qu.exact-cpd}
  What is the exact value of $\cpd(S_n)$ or $\cpd(\Aff(\bF_q))$ ?
\end{question}

Notice that for symmetric groups we do at least know that $\cpd(S_n)\to \infty$ for $n\to \infty$, as $S_n$ contains a subgroup $(\bZ/2)^{\lfloor n/2 \rfloor}$. For $\cpd(\Aff(\bF_p))$, $p$ prime, we do not known even this, since abelian subgroups of $\Aff(\bF_p)$ are cyclic.

By \Cref{pr.cpd-leq-gcpd}, we have $\cpd(\Gamma) \leq \gcpd(\Gamma)$ for all groups $\Gamma$ except cyclic groups of order $\geq 3$ and the Klein four-group.  This motivates the following question:

\begin{question}\label{qu.cpd-gcpd}
  Is there a group $\Gamma$ such that $\cpd(\Gamma) < \gcpd(\Gamma)$ ?
\end{question}

When such a group $\Gamma$ exists, then there must be a polytope~$\cP$ of dimension $d=\cpd(\Gamma)$, such that $\Gamma\cong \Gamma(\cP)$ is not isomorphic to a subgroup of $\GL(d,\bR)$.  Conversely, when $\cP$ is a convex $d$-polytope~$\cP$ such that $\Gamma=\Gamma(\cP)$ is not isomorphic to a subgroup of $\GL(d,\bR)$, then $\cpd(\Gamma) \leq d < \gcpd(\Gamma)$, as $\GL(k,\bR) \subset \GL(d,\bR)$ for $k < d$.

Of course, a negative answer to \Cref{qu.cpd-gcpd} would also settle \Cref{qu.exact-cpd}.

Bokowski, Ewald and Kleinschmidt~\cite{BokowskiEK84} found the first example of a polytope~$\cP$ (in dimension~$4$) which has a combinatorial automorphism~$\varphi$ that can not be realized geometrically in the sense that the polytope has no geometric realization that admits $\varphi$ as geometric automorphism. Mani~\cite{Mani71} showed that in dimension~$3$, for every convex polytope there is a combinatorially equivalent polytope, such that all its combinatorial automorphisms come from geometric automorphisms.  It follows that when $\gcpd(\Gamma) > 3$, then also $\cpd(\Gamma) >3$.  Not much else seems to be known.

If additional geometric structure is required on the realization of $G$ as a group of combinatorial symmetries of a polytope $\cP$, it can imply restrictions on the dimension of $\cP$.  Interestingly, such conditions are more diverse than just lower bounds on the dimension.  For example, if $\sigma$ is a central involution and there exists an element $\gamma$ such that $\gamma^2 = \sigma$, then for $\sigma$ to act as a central symmetry we need the ambient dimension to be even.  This is implied by a simple degree argument \cite[Proof of Theorem~4.2]{ssw}. Therefore, if we impose the way that a subgroup $A \subset G$ has to act on our polytope $\cP$, it can heavily restrict on which dimensions admit such a polytope.


\addcontentsline{toc}{section}{References}
\providecommand{\bysame}{\leavevmode\hbox to3em{\hrulefill}\thinspace}
\providecommand{\MR}{\relax\ifhmode\unskip\space\fi MR }
\providecommand{\MRhref}[2]{%
  \href{http://www.ams.org/mathscinet-getitem?mr=#1}{#2}
}
\providecommand{\href}[2]{#2}

\Addresses


\begin{thebibliography}{BHNP09}

\bibitem[Bab77]{Babai77}
L.~Babai, \emph{Symmetry groups of vertex-transitive polytopes}, Geometriae
  Dedicata \textbf{6} (1977), no.~3, 331--337. \MR{0486080 (58 \#5868)}

\bibitem[BEK84]{BokowskiEK84}
J.~Bokowski, G.~Ewald, and P.~Kleinschmidt, \emph{On combinatorial and affine
  automorphisms of polytopes}, Israel J. Math. \textbf{47} (1984), no.~2-3,
  123--130. \MR{738163 (85i:52001)}

\bibitem[BHNP09]{Baumeister:2009dfa}
B.~Baumeister, C.~Haase, B.~Nill, and A.~Paffenholz, \emph{{On permutation
  polytopes}}, Advances in Mathematics \textbf{222} (2009), no.~2, 431--452.

\bibitem[Bor60]{borel-seminar}
A.~Borel, \emph{Seminar on transformation groups}, With contributions by G.
  Bredon, E. E. Floyd, D. Montgomery, R. Palais. Annals of Mathematics Studies,
  No. 46, Princeton University Press, Princeton, N.J., 1960. \MR{0116341}

\bibitem[Bry02]{bry-pl}
J.~L. Bryant, \emph{Piecewise linear topology}, Handbook of geometric topology,
  North-Holland, Amsterdam, 2002, pp.~219--259. \MR{1886671}

\bibitem[Bur55]{BurnsideTGFO}
W.~Burnside, \emph{Theory of groups of finite order}, Dover Publications, 1955,
  Unabridged republication of 2nd 1911 edition, {Cambridge University Press}.

\bibitem[Doi18]{Doignon:2018ch}
J.-P. Doignon, \emph{Any finite group is the group of some binary, convex
  polytope}, Discrete Comput. Geom. \textbf{59} (2018), no.~2, 451--460.
  \MR{3755730}

\bibitem[FL16]{Friese:2016ka}
E.~Friese and F.~Ladisch, \emph{{Affine symmetries of orbit polytopes}},
  Advances in Mathematics \textbf{288} (2016), 386--425.

\bibitem[FL18]{FrieseLadisch18}
E.~Friese and F.~Ladisch, \emph{Classification of affine symmetry groups of
  orbit polytopes}, J. Algebraic Combin. \textbf{48} (2018), no.~3, 481--509.
  \MR{3864737}

\bibitem[FP90]{fp-cell}
R.~Fritsch and R.~A. Piccinini, \emph{Cellular structures in topology},
  Cambridge Studies in Advanced Mathematics, vol.~19, Cambridge University
  Press, Cambridge, 1990. \MR{1074175}

\bibitem[GP06]{Guralnick:2006gk}
R.~M. Guralnick and D.~Perkinson, \emph{{Permutation polytopes and
  indecomposable elements in permutation groups}}, Journal of Combinatorial
  Theory, Series A \textbf{113} (2006), no.~7, 1243--1256.

\bibitem[Hup98]{huppCT}
B.~Huppert, \emph{Character theory of finite groups}, De Gruyter Expositions in
  Mathematics, no.~25, Walter de Gruyter, Berlin, New York, 1998. \MR{1645304
  (99j:20011)}

\bibitem[Isa77]{Isaacs77}
I.~M. Isaacs, \emph{Linear groups as stabilizers of sets}, Proc. Amer. Math.
  Soc. \textbf{62} (1977), no.~1, 28--30. \MR{0427489 (55 \#521)}

\bibitem[Man71]{Mani71}
P.~Mani, \emph{Automorphismen von polyedrischen {Graphen}}, Math. Ann.
  \textbf{192} (1971), 279--303. \MR{0296808}

\bibitem[Mau96]{maun-at}
C.~R.~F. Maunder, \emph{Algebraic topology}, Dover Publications, Inc., Mineola,
  NY, 1996, Reprint of the 1980 edition. \MR{1402473}

\bibitem[MZ06]{mz}
M.~Mecchia and B.~Zimmermann, \emph{On finite simple and nonsolvable groups
  acting on homology 4-spheres}, Topology Appl. \textbf{153} (2006), no.~15,
  2933--2942. \MR{2248395}

\bibitem[Smi39]{smth-per2}
P.~A. Smith, \emph{Transformations of finite period. {II}}, Ann. of Math. (2)
  \textbf{40} (1939), 690--711. \MR{177}

\bibitem[Smi44]{smth}
\bysame, \emph{Permutable periodic transformations}, Proc. Nat. Acad. Sci. U.
  S. A. \textbf{30} (1944), 105--108. \MR{0010278}

\bibitem[SSW19]{ssw}
E.~Schulte, P.~Sober{\'o}n, and G.~I. Williams, \emph{{Prescribing Symmetries
  and Automorphisms for Polytopes}}, arXiv:1902.05439 \textbf{[math.CO]}
  (2019), to appear in Contemporary Mathematics.

\bibitem[SW15]{sw}
E.~Schulte and G.~I. Williams, \emph{Polytopes with preassigned automorphism
  groups}, Discrete Comput. Geom. \textbf{54} (2015), no.~2, 444--458.
  \MR{3372119}

\bibitem[Wil79]{wild-man}
R.~L. Wilder, \emph{Topology of manifolds}, American Mathematical Society
  Colloquium Publications, vol.~32, American Mathematical Society, Providence,
  R.I., 1979, Reprint of 1963 edition. \MR{598636}

\end{thebibliography}
\end{document}